\theoremstyle{definition}
\newtheorem{definition}{Definition}
\theoremstyle{plain}
\newtheorem{theorem}[definition]{Theorem}
{Lemma}
{Theorem}
\newtheorem{lemma}[definition]{Lemma}
\newtheorem{fact}[definition]{Fact}
\newtheorem{claim}[definition]{Claim}
{Conjecture}
\theoremstyle{remark}
\newcommand{\hb}{\hat{b}}
\newcommand{\tH}{J}
\newcommand{\hH}{\hat{H}}
\newcommand{\hn}{\bar n}
\newcommand{\hG}{\hat G}
\newcommand{\hV}{\hat V}
\newcommand{\hE}{\hat E}
\newcommand{\shm}{\sqrt{\hat m}}
\title[Long even cycles in 3-coloured graphs]
{Long monochromatic even cycles\\ in 3-edge-coloured graphs\\
	of large minimum degree}
\author{Tomasz \L{}uczak}
\address{Adam Mickiewicz University\\
	Faculty of Mathematics and Computer Science\\
	ul.\ Uniwersytetu Pozna\'nskiego 4,
	61-614 Pozna\'n, Poland}
\email{\tt tomasz@amu.edu.pl}
\author{Zahra Rahimi}
\address{Department of Mathematical Sciences\\
	Isfahan University of Technology\\
	Isfahan, 84156-83111, Iran}
\email{\tt zahra.rahimi@math.iut.ac.ir}
\thanks{The first author partially supported by National Science Centre, Poland, grant
	2017/27/B/ST1/00873.}
\keywords {Ramsey number,  cycles, minimum degree}
\subjclass[2010]{Primary:  05C55, secondary: 05C38. }
\begin{document}
		
	\begin{abstract} We show that for every $\eta>0$, there exists $n_0$ such that for every even $n$, $n\ge n_0$, 
		and every graph 
		$G$ with $(2+\eta)n$ vertices and minimum degree at least $(7/4+4\eta)n$, each colouring of the edges of $G$ with 
		three colours results in a monochromatic cycle of length $n$. 
	\end{abstract}
	
	\maketitle
	
	\date{January 1st, 2020}
	
	\section{Introduction}
	
	We write $G\to (H)_r$ when for each colouring 
	of the edges of a graph~$G$ with $r$ colours 
	there exists a monochromatic copy of a graph $H$ in~$G$. 
	Ramsey Theorem states that for every $H$ there exists  $N$ such that  
	$K_N\to (H)_r$; the smallest $N$ with this property  
	we denote $R_r(H)$. Schelp~\cite{Schelp} observed that for sparse graphs $H$, 
	such as paths or cycles,  we may expect
	that $G\to (H)_r$ 
	for all $G$ which have  $R_r(H)$ vertices, provided the  minimum degree 
	of $G$ is large enough. 
	For  $H$ which is a path or a cycle and just two colours this  problem has been thoroughly 
	studied in a series of papers (see \cite{BLSSW, GS, LNS, Sch, W}), so now we know the minimum value of the constant  $c$ for which, 
	in the relation $K_N\to (C_n)_2$, the complete 
	graphs $K_N$ can be replaced by $G$ with  $\delta(G)\ge cN$, at least for large $n$.
	Not surprisingly, the optimal constant $c$ depends on the parity of $n$.  
	
	Schelp's problem for  cycles and more than two colours seems to be much harder.
	Let us mention first that the value of the Ramsey number $R_k(C_n)$ is known only 
	for some special cases (see \cite{BS, FL, FL2, JS, KSS,L}).
	Recently, the authors of this paper have proved the following asymptotic version for 
	Schelp's problem for three long odd cycles (see~\cite{LR}).
	
	\begin{theorem}\label{thm:mainold}
		For every $\eta>0$ there exists $n_0$ such that for every odd $n\ge n_0$ and every graph $G$ on 
		$(4+\eta)n$ vertices with $\delta(G)\ge (7/2+2\eta)n$ each  colouring of
		the edges of $G$ with three colours leads to a monochromatic cycle of length $n$. 
	\end{theorem}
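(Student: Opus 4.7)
The proof would follow the now-standard regularity-plus-structure framework for Schelp-type problems. First, I would apply Szemer\'edi's Regularity Lemma to $G$ with parameters $\varepsilon, d$ depending on $\eta$, obtaining clusters $V_1, \dots, V_k$ of common size $L = (4+\eta)n/k$. The reduced graph $R$ on $k$ vertices has $V_iV_j$ as an edge of colour $c$ whenever $(V_i, V_j)$ is $\varepsilon$-regular of density at least $d$ and the plurality colour between the two clusters is $c$; a standard averaging argument then transfers the minimum-degree hypothesis on $G$ to $\delta(R) \geq (7/8 + \eta/4 - o(1))k$.

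Next, I would invoke an embedding lemma of the type developed by Figaj and \L{}uczak: if some colour class of $R$ contains a connected subgraph with a matching of size $m$ together with an odd cycle, then $G$ contains a monochromatic cycle of every odd length up to $(2 - o(1)) m L$. Since the target length is $n$, it suffices to find in some colour class an ``odd connected matching'' of size larger than $k/(2(4+\eta))$, which for small $\eta$ exceeds $k/8$ by a positive margin. The problem thereby reduces to the following combinatorial statement on the reduced graph: every $3$-edge-colouring of a graph on $k$ vertices with minimum degree at least $(7/8 + \eta/4)k$ admits, in some colour, a connected subgraph containing both an odd cycle and a matching of size larger than $k/(2(4+\eta))$.

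The main obstacle is the third step, which is to verify this reduced-graph statement. If some colour class has a non-bipartite connected component on more than $(1/2 + \eta/8)k$ vertices, a Gallai--Edmonds-style matching argument yields the desired odd connected matching. Otherwise all sufficiently large monochromatic components are bipartite, and one must rule out this scenario by a structural analysis. The delicate configurations are those in which two colours form essentially complementary bipartite components while the third colour fills the remaining edges; to eliminate them one partitions $V(R)$ according to the two bipartitions into four cells and performs a careful double-counting of degrees within and between the cells. The constant $7/8$ is tight precisely because of these configurations, so the argument must be sharp enough to exploit the full strength of $\delta(G) \geq (7/2+2\eta)n$.
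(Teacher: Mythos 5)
Your first two steps (regularity, majority-coloured reduced graph, transfer of the minimum degree to roughly $(7/8+c\eta)k$, and the Figaj--\L{}uczak-type embedding reducing the problem to a monochromatic \emph{odd} connected matching of size about $k/(2(4+\eta))$ in the reduced graph) are correct and are exactly the framework used for this theorem. Note, however, that the present paper does not prove Theorem~\ref{thm:mainold} at all: it is quoted from the authors' earlier paper \cite{LR}; what this paper does is develop the analogous machinery for even cycles, where the same reduction leads to Theorem~\ref{thm:main} about connected matchings saturating $n$ vertices.

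The genuine gap is your third step, which is where essentially all of the work lies, and as stated it would fail. Having a non-bipartite monochromatic component on more than $(1/2+\eta/8)k$ vertices does not, via Gallai--Edmonds or otherwise, yield a connected matching of size about $k/8$ in that colour: the colour class inherits no minimum degree condition from $R$ (a huge component can have a tiny maximum matching, e.g.\ be star-like), so the large-component case needs a real argument. In the even-cycle analogue in this paper this is the content of Lemma~\ref{l:giant} (all of Section~\ref{sec:large}): one applies a Tutte-type partition $V_1=S_1\cup T_1\cup U_1$ to the big component, deletes $S_1$, and then runs a delicate analysis (Claims~\ref{cl11}--\ref{claim4}, Lemma~\ref{l:m2}) to build a large matching inside a component of one of the \emph{other two} colours. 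Likewise, the complementary case is not settled by ``a careful double-counting of degrees within four cells'': the actual argument (Sections~\ref{sec:comp}--\ref{proof} here, and its odd-cycle counterpart in \cite{LR}) takes Tutte partitions in all three colours simultaneously, controls the pairwise and triple intersections $A_i$, $A_{ij}$, $A_{123}$ of the three relevant components, and works through an extensive case analysis; moreover the extremal configurations are more intricate than ``two complementary bipartite components plus a third colour'' --- see the $8\ell$-vertex example with the special vertices $a,b$ and the eight parts $A_{ij}$ in Section~\ref{sec:example}, which is what makes the constant tight. So your proposal identifies the right reduction but leaves the combinatorial core, which is the actual theorem, unproved, and the two specific mechanisms you offer for it are not sufficient.
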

	
	In this paper we show an analogous result for even cycles. 
	Let us recall that for $n$  even and large enough we have $R(C_{n},C_{n},C_{n})=2n$ 
	(see Benevides and Skokan~\cite{BS}). 
	On the other hand, in Section~\ref{sec:example} we give an example of a graph $G$ on $8\ell$ vertices and 
	$\delta(G)\ge 7\ell-2$ which has a 3-edge-colouring without monochromatic $C_{4\ell}$. 
	The following theorem is a strong indication that, if $\hn$ is a large enough even number,  each 3-edge-colouring of a graph $G$ 
	with $2\hn$ vertices and minimum degree at least $\lceil 7\hn/4\rceil$ contains a cycle of 
	length $\hn$. 	
	\begin{theorem}\label{thm:main0}
		For every $\eta>0$ there exists $\hn_0$ such that for every even $\hn\ge \hn_0$ and every graph $G$ on 
		$2(1+\eta)\hn$ vertices with $\delta(G)\ge (7/4+2\eta)\hn$ each  colouring of
		the edges of $G$ with three colours leads to a monochromatic cycle of length $\hn$. 
	\end{theorem}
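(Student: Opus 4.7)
The plan is to adapt the regularity-method framework used in the proof of Theorem~\ref{thm:mainold} for odd cycles, replacing the connected-matching ingredient by its bipartite analogue so that one obtains a monochromatic cycle of \emph{even} length. Fix $\varepsilon\ll\eta$ and apply Szemer\'edi's regularity lemma in its coloured form to $G$, partitioning $V(G)$ into clusters $V_1,\dots,V_k$ of common size, and form the coloured reduced multigraph $\hG$ whose edges in each colour correspond to $\varepsilon$-regular pairs of density at least, say, $\eta/10$ in that colour. After discarding the exceptional set and sparse pairs, a routine count shows that every vertex of $\hG$ has multicoloured degree at least $(7/4+\eta)k/(2(1+\eta))\cdot k\approx(7/8+\eta)k$, so $\hG$ inherits the same type of density/min-degree hypothesis as $G$, scaled by the cluster size.

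The heart of the argument is the following statement about the reduced multigraph $\hG$: in some colour, say red, there is a connected subgraph $H\subseteq\hG$ that is bipartite and contains a matching $M$ of size at least $(1+\eta/3)\hn/(2|V_1|)$ (so that the two sides of $H$ will serve as the two colour classes of the even cycle to be built). I would establish this by a stability argument. If no such structure exists in any of the three colours, then the 3-colouring of $\hG$ must be close to one of a short list of extremal configurations modelled on the construction from Section~\ref{sec:example}; in each such configuration some colour class either splits into small components, or lacks a long matching, or is confined to odd components incapable of supporting an even cycle of the required length. For each configuration one then derives a contradiction from $\delta(G)\ge(7/4+2\eta)\hn$, using that the density constant $7/8$ in $\hG$ is sharp only for the specific template given in Section~\ref{sec:example}.

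Once the monochromatic bipartite connected matching $(H,M)$ is in place, the standard regularity blow-up technique turns it into a cycle of any prescribed even length up to roughly $2|V_1|\cdot|M|$: one traverses the edges of $M$ in turn, alternating sides through each super-regular pair, uses the in-colour connectivity of $H$ to jump between consecutive matching edges via short alternating paths, and tunes how many vertices are consumed in each regular pair so as to land on length exactly $\hn$. Because $H$ is bipartite and $\hn$ is even, there is no parity obstruction to closing up. The main obstacle I expect is the stability step in $\hG$: in contrast with the odd-cycle proof of~\cite{LR}, where an arbitrary monochromatic connected component with a long matching is enough, here one must insist that this component be \emph{bipartite}, which enlarges the family of near-extremal colourings. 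Ruling out the configurations in which some colour does contain a long connected matching but only inside an odd (non-bipartite) component, at densities only just below the extremal threshold, is where the bulk of the work should lie, and where the proof must genuinely diverge from the odd case.
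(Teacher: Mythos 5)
There is a genuine conceptual error in your reduction, and it sits exactly where you locate ``the bulk of the work.'' For an \emph{even} target cycle no bipartiteness condition on the host component is needed: if a colour has a connected matching $M$ saturating slightly more than $\hn/|V_1|$ clusters, the blown-up cycle projects onto a closed walk in that component of the reduced graph, and its length can be tuned to any sufficiently large even value in range -- if the component is bipartite the parity is automatic, and if it is non-bipartite one simply adjusts a connecting walk through an odd closed walk. This is precisely the correspondence the paper invokes (due to \cite{L}, with precise statements in \cite{FL2} and \cite{LR}): even cycles $\leftrightarrow$ connected matchings, odd cycles $\leftrightarrow$ \emph{odd} connected matchings. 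You have the roles reversed -- it is the odd-cycle problem of Theorem~\ref{thm:mainold} that needs the non-bipartiteness, while here insisting on a bipartite connected subgraph containing the matching strengthens the reduced-graph statement needlessly (a non-bipartite component with a long matching need not contain a connected bipartite subgraph with one, so your target statement may even fail at this density), and the case you plan to spend most effort excluding -- a long connected matching living only in a non-bipartite component -- is not an obstruction at all.

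The second, larger gap is that the actual content of the theorem is left unproved. After the regularity reduction, what must be shown is exactly Theorem~\ref{thm:main}: every $3$-colouring of a graph on $(2+\eta)n$ vertices with minimum degree $(7/4+4\eta)n$ has a monochromatic component containing a matching saturating $n$ vertices. Your proposal replaces this by an unspecified ``stability'' claim that any colouring without such a structure must resemble the construction of Section~\ref{sec:example}; no argument is offered for this, and the paper does not proceed that way either. Its proof of Theorem~\ref{thm:main} occupies Sections 3--5 and is a direct structural analysis: Tutte--Berge-type partitions $V_i=S_i\cup T_i\cup U_i$ of the components without large matchings (Lemma~\ref{tutte}), the case of a monochromatic component on at least $3N/4$ vertices (Lemma~\ref{l:giant}), and then a delicate study of how two or three medium-size monochromatic components intersect (Lemmas~\ref{l:comp}--\ref{l:small}). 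So even granting the (standard) regularity and blow-up steps, your proposal as written reduces the theorem to a statement that is both stronger than necessary and not established.
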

	
	As was observed in \cite{L}, due to Szem\'eredi's Regularity Lemma, if we are interested only in 
	asymptotic results for large $n$, dealing with monochromatic even cycles are basically equivalent to studying matchings 
	contained in one monochromatic component (see \cite{FL2} and \cite{LR} for a precise statement of this correspondence and its proof). 
	In particular, in order to prove Theorem~\ref{thm:main}, it is enough to show the following result.  
	
	\begin{theorem}\label{thm:main}
		For every $\eta>0$ there exists $n_0$ such that for every even $n\ge n_0$ and every graph $G$ on 
		$(2+\eta)n$ vertices with $\delta(G)\ge (7/4+4\eta)n$ each  colouring of
		the edges of $G$ with three colours leads to  a monochromatic 
		component which contains a matching saturating $n$ vertices. 
	\end{theorem}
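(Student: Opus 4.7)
We argue by contradiction. Write $N=(2+\eta)n$ for the order of $G$, and suppose that every monochromatic component $C$ of $G$ satisfies $\nu(C)\le(n-2)/2$; write $G_i$ for the colour-$i$ subgraph. For every component $C$ of $G_i$, the Tutte--Berge formula provides a witness $S_{i,C}\subseteq V(C)$ with
\[
o\bigl(G_i[V(C)\setminus S_{i,C}]\bigr)-|S_{i,C}|\;=\;|V(C)|-2\nu(C)\;\ge\;|V(C)|-n+2,
\]
and the Gallai--Edmonds decomposition identifies the odd pieces as factor-critical blocks with every maximum matching of $C$ missing exactly one vertex per block.

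The central step is a Gy\'arf\'as-style covering argument combined with a Gallai--Edmonds-based pointwise bound. Since every vertex has at most $(1/4-3\eta)n$ non-neighbours in $G$, an adaptation of the covering argument from~\cite{LR} shows that, for each colour $i$, all but $O(\eta n)$ vertices of $V(G)$ lie in one of a bounded collection of \emph{dominant} components $A_{i,1},\dots,A_{i,m_i}$ of $G_i$, each of size at least $\eta n$. A double counting on the sizes and counts of the factor-critical blocks in each dominant component then produces a vertex $v$ that lies, for each colour~$i$, in a \emph{small} factor-critical piece $F_i\ni v$ inside the dominant component $A_i\ni v$ of~$G_i$. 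The Gallai--Edmonds structure forces every colour-$i$ neighbour of $v$ into $S_{i,A_i}\cup(F_i\setminus\{v\})$, and a short computation combining $|V(A_i)|-2\nu(A_i)\ge|V(A_i)|-n+2$ with the count of factor-critical blocks yields $|S_{i,A_i}|\le(n-2)/2$ for each~$i$. Summing over colours,
\[
\deg_G(v)\;\le\;\sum_{i=1}^{3}\bigl(|S_{i,A_i}|+|F_i|-1\bigr)\;\le\;\tfrac{3n}{2}+\sum_{i=1}^{3}|F_i|+O(\eta n),
\]
which for small $|F_i|$ is at most $(3/2+O(\eta))n<(7/4+4\eta)n=\delta(G)$, the desired contradiction.

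The main obstacle lies in securing the two assumptions used above: first, that $v$ simultaneously lies in a \emph{small} factor-critical piece in every colour, which can fail if some dominant component harbours a factor-critical block of size comparable to~$n$; and second, that $v$ lies in a dominant component in every colour, since otherwise the Gallai--Edmonds bound of the missing colour is vacuous. Both bad cases must be ruled out by showing that they would force a large monochromatic matching in some other colour: a large factor-critical block in colour $i$, for instance, together with the high minimum degree, provides a dense graph on a linear set of vertices in each of the other two colours, from which a matching of size $n/2$ can be extracted. Carrying out this case analysis---which is exactly where the $7/4$ threshold of the extremal construction announced in the introduction enters---is expected to be the technical core of the paper, parallel in structure to but simpler than the odd-cycle argument of~\cite{LR}.
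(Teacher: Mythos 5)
There is a genuine gap, and it is not a peripheral one: your proposal explicitly defers the two ``bad cases'' (a vertex $v$ failing to lie in a \emph{small} factor-critical piece in some colour, or failing to lie in a dominant component in some colour) to an unexecuted case analysis that you yourself call ``the technical core.'' That deferred part is the whole theorem. Note that if your pointwise bound $\deg_G(v)\le\sum_i(|S_{i,A_i}|+|F_i|-1)\le(3/2+O(\eta))n$ were available at even one vertex, you would obtain a contradiction already at minimum degree about $3n/2$; but the construction in Section~\ref{sec:example} (a graph on $8\ell$ vertices with minimum degree $7\ell-2$ and no monochromatic $C_{4\ell}$, i.e.\ no monochromatic component with a matching on $4\ell$ vertices after the standard reduction) shows the correct threshold is $7/4$. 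Hence in the extremal colouring \emph{every} vertex must violate your hypotheses in at least one colour, so the bad cases are not exceptional pathologies to be swept away but the generic situation. Moreover, the one concrete mechanism you offer for excluding them is incorrect: a factor-critical block of colour $i$ on more than $n$ vertices already yields the desired matching in colour $i$ itself (factor-critical graphs have near-perfect matchings), while a block of intermediate size, say between $\eta n$ and $n$, forces no density whatsoever in the other two colours on that vertex set, so no matching of size $n/2$ in another colour can be ``extracted'' from it by the argument you indicate.

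For comparison, the paper does use the same Tutte--Berge/Gallai--Edmonds starting point (Lemma~\ref{tutte} gives the partition $S_i\cup T_i\cup U_i$ with $2|S_i|+|U_i|<n+\sqrt N$ for each deficient component), but the work is organized around global component structure rather than a single well-placed vertex: first the case of a monochromatic component on at least $3N/4$ vertices (Lemma~\ref{l:giant}), handled by passing to the set $W=T_1\cup U_1\cup(V\setminus V_1)$, where only two colours survive, and applying the matching lemma for the auxiliary graph $\hH(m_0;m_1,m_2,m_3)$ (Lemma~\ref{l:m2}); then the case where all monochromatic components have fewer than $3N/4$ vertices (Lemmas~\ref{l:comp}, \ref{l:comp3}, \ref{l:small}), where one shows there are two further large components covering $V$ together with a third, and derives a contradiction from the interaction of the three Tutte sets $T_1',T_2',T_3'$ with the intersection pattern $A_i,A_{ij},A_{123}$. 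It is exactly this component-level case analysis that realizes the $7/4$ constant, and nothing in your sketch substitutes for it.
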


	\section{Basic tools}
	
	In this section we introduce notations 
	we use throughout the paper and give a number of elementary facts on matchings and monochromatic components of coloured graphs.
	
Instead of the minimum degree, here and below we rather use the notion of density.
We say that a subgraph $G$ of a graph $H$  is {\em $b$-dense in~$H$} if for each vertex $v$ 
the difference between the degree of   $v$ in $G$ and its degree in $H$ 
is less than~$b$. Moreover, if a graph $G$ on $N$  is $b$-dense in $K_N$, which means that $\delta(G)\ge N-b$,  
we often just say that $G$ is $b$-dense.  
	
We start with a few  elementary observations.

	\begin{fact}\label{l:ind}
		Let $G=(V,E)$ be a graph which contains no matchings saturating $n$ vertices and let 
		$v\in V$ be a vertex of $V$ of degree at least $n-1$. Then a graph $G'=(V,E')$ obtained from $G$ 
		by adding to it additional edges incident to $v$ contains no matchings saturating $n$ vertices either.
	\end{fact}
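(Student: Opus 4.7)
The plan is to argue by contradiction: assume $G'$ contains a matching $M'$ saturating $n$ vertices, and modify it to produce such a matching in $G$ itself, contradicting the hypothesis on $G$.

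First I would dispose of the easy cases by observing that $E' \setminus E$ consists only of edges incident to $v$. If $v$ is unsaturated by $M'$, then no edge of $M'$ touches $v$, so every edge of $M'$ already lies in $E$ and $M'$ itself is the desired matching in $G$. Hence $v$ is covered by a unique edge $e = vw$ of $M'$. If $e \in E$, then again the remaining edges of $M'$ avoid $v$ (by the matching property), so $M' \subseteq E$ and we are done. So the only nontrivial case is $vw \in E' \setminus E$.

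In this case the plan is to swap $vw$ for a suitable $G$-edge at $v$. Set $M'' = M' \setminus \{vw\}$, a matching in $G$ saturating exactly $n-2$ vertices, none of them equal to $v$ or to $w$. Since $\deg_G(v) \ge n-1$ while the saturated set of $M''$ has size $n-2$, at least one neighbor $u$ of $v$ in $G$ lies outside this set; moreover $u \ne w$ because $vw \notin E$. Then $M'' \cup \{vu\}$ is a matching in $G$ saturating $n$ vertices, contradicting the hypothesis on $G$.

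I do not foresee a real obstacle: the only point requiring a bit of care is the small counting argument that produces the neighbor $u$, which exploits exactly the slack between $\deg_G(v) \ge n-1$ and the $n-2$ vertices already saturated by $M''$. Everything else follows immediately from the matching property and the fact that the new edges are incident only to $v$.
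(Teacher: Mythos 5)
Your proposal is correct and follows essentially the same route as the paper's proof: delete the matching edge at $v$ and use $\deg_G(v)\ge n-1 > n-2$ to find an unsaturated $G$-neighbour $u$ of $v$, then add $vu$ back. You merely spell out the trivial cases (the edge at $v$ already in $E$, or $v$ unsaturated) that the paper's one-line argument leaves implicit.
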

	
	\begin{proof}
		Let us suppose that $G'$ contains a matching $M'$ saturating $n$ vertices and $\{v,w\}\in M'$. 
		Then 	$v$ has a neighbour $u$ in $G$ not saturated by $M'\setminus \{v,w\}$, and so $M=M'\setminus \{v,w\}\cup \{v,u\}$ 
		is a matching in $G$ contradicting the assumption.
	\end{proof}
	
	\begin{lemma}\label{l:match1}
		Let $G=(V,E)$ be a $b$-dense subgraph of the complete bipartite graph with bipartition $V=V_1\cup V_2$, where 
		$|V_1|\le |V_2|$. Then the following holds.
		\begin{enumerate}
			\item [(i)] If $|V_2|>b $, then there exists  a component in $G$ with at least $|V_1|+|V_2|-2b$ vertices. 
			\item [(ii)] If $|V_2|>2b $, then  there exists a component in $G$ which contains all vertices of $V_1$ and at least  $|V_2|-b$ vertices of $V_2$.
			\item [(iii)] If $|V_1|>b$ and $|V_2|>2b $ then $G$ is connected and contains a matching saturating all vertices of~$V_1$. 
		\end{enumerate}	
	\end{lemma}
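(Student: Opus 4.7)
My plan is first to unpack the $b$-density hypothesis into concrete degree lower bounds: $\deg_G(v)\ge |V_2|-b+1$ for every $v\in V_1$ and $\deg_G(v)\ge |V_1|-b+1$ for every $v\in V_2$. All three parts are then essentially consequences of the bipartite pigeonhole principle that two vertices on the same side must have overlapping neighbourhoods once the sum of their neighbourhood sizes exceeds the size of the opposite side.

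For (i), I would pick any $v\in V_1$ and let $C$ be its component. The hypothesis $|V_2|>b$ ensures $N(v)\ne\emptyset$; picking any $w\in N(v)\subseteq V_2\cap C$ and applying the degree bound to $w$ puts at least $|V_1|-b+1$ vertices of $V_1$ into $C$, while $v$ itself already contributes at least $|V_2|-b+1$ vertices of $V_2$ to $C$. Summing gives $|C|>|V_1|+|V_2|-2b$, as required.

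For (ii), the strengthened assumption $|V_2|>2b$ is exactly what is needed for the neighbourhoods $N(u_1),N(u_2)\subseteq V_2$ of any two vertices $u_1,u_2\in V_1$ to meet, since $2(|V_2|-b+1)>|V_2|$. This forces all of $V_1$ into a single component $C$, and the same degree bound applied to any vertex of $V_1$ then witnesses at least $|V_2|-b+1>|V_2|-b$ vertices of $V_2$ inside $C$.

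For (iii), connectedness is an immediate corollary of (ii) together with $|V_1|>b$: any $w\in V_2$ has at least two neighbours in $V_1\subseteq C$, so $w\in C$. The matching I would extract via Hall's theorem applied to $V_1$; on a subset $S\subseteq V_1$ with $|S|\le b$, even a single vertex of $S$ has more than $b\ge|S|$ neighbours, while for $|S|>b$ the lower bound on $\deg_G(w)$ for $w\in V_2$ forces every $w\in V_2$ to have at least one neighbour in $S$, whence $N(S)=V_2$ and $|N(S)|\ge|V_1|\ge|S|$. I do not anticipate a real obstacle here; the whole argument is elementary counting, and the only point requiring care is keeping track of the strict inequality hidden in the definition of $b$-density so that the thresholds $b$ and $2b$ are used sharply.
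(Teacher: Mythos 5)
Your proposal is correct and follows essentially the same route as the paper: shared neighbourhoods of vertices on the same side force everything into one component for (i) and (ii), and (iii) is the same Hall's-condition case split on $|S|\le b$ versus $|S|>b$ (the paper merely phrases it as a contradiction). No substantive difference.
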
         
	
	\begin{proof} If $|V_2|>b$ then $G$ contains at least one edge $\{v_1,v_2\}$, where $v_1\in V_1$, $v_2\in V_2$. Since $v_1$ has at least $|V_2|-b$ neighbours
		and $v_2$ at least $|V_1|-b$ neighbours, (i) follows. 
		
		If $|V_2|>2b$ then each pair of vertices from $V_1$ share a neighbour, so all of them are contained in one component. Since 
		each vertex from $V_1$ has at least  $|V_2|-b$ neighbours, we get (ii). 
		
		Finally, if $|V_1|>b$ and $|V_2|>2b$, then by (ii) all vertices of  $V_1$ belong to one component and each vertex from $V_2$ has a neighbour in $V_1$,
		so~$G$ is connected.  
		Suppose that it contains no 	matching saturating all vertices from $V_1$. 
		From Hall's condition we infer that for some 
		$S\subseteq V_1$ we have 
		\begin{equation}\label{e:2}
		|N(S)|<|S|.
		\end{equation}
		But each vertex from $V_1$ has at least  $|V_2|-b>b$ neighbours, so we must have $|S|>b$. This, in turn, implies that 
		each vertex $w\in V_2$ has a neighbour in $S$, and so 
		$$ |N(S)|=|V_2|\ge |V_1|\ge |S|,$$
		condtradicting (\ref{e:2}).  
	\end{proof}
	
	\begin{lemma}\label{l:match2}
		Let $G=(V,E)$ be a graph which is $N/8$-dense in $K_N$, 
		whose vertices are coloured with three colours,  
	and let $F_1=(V_1, E_1)$ and $F_2=(V_2, E_2)$  denote two monochromatic components of the first and the second colours respectively.  
		Then either there exists a component of the third colour which 
		contains a matching saturating $N/2$ vertices, or 
		$$\min\{|V_1\setminus V_2|,| V_2\setminus  V_1|\}<N/4.$$
		
		In particular, if $G$ contains no monochromatic component with matching saturating 
		at least $N/2$ vertices, then each two monochromatic components of at least $N/2$ vertices each 
		share more than $N/4$ vertices.
	\end{lemma}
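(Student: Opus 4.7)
The plan is to prove the main assertion by contraposition: suppose $|A|:=|V_1\setminus V_2|\ge N/4$ and $|B|:=|V_2\setminus V_1|\ge N/4$, and produce a matching saturating at least $N/2$ vertices inside a single third-colour component. The key structural observation is a colour-exclusion principle on edges between $A$ and $B$: an edge $\{a,b\}\in E(G)$ with $a\in A$, $b\in B$ cannot have colour~$1$, since it would force $b\in V_1$, contradicting $b\in B=V_2\setminus V_1$; and symmetrically it cannot have colour~$2$. Hence every $G$-edge between $A$ and $B$ carries the third colour.

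With this in hand, I would restrict attention to the bipartite subgraph $H$ of $G$ on parts $A$ and $B$. Since $G$ is $N/8$-dense in $K_N$, every vertex of $A\cup B$ misses strictly fewer than $N/8$ vertices of $V$, and in particular fewer than $N/8$ vertices on the opposite side of $H$; consequently $H$ is $N/8$-dense in $K_{A,B}$. Setting $b=N/8$, we have $|A|,|B|\ge N/4\ge 2b$, so I would invoke Lemma~\ref{l:match1}(iii) to conclude that $H$ is connected and contains a matching $M$ saturating its smaller side. Then $|M|\ge\min(|A|,|B|)\ge N/4$, so $M$ covers at least $N/2$ vertices; and since every edge of $H$ is of the third colour and $H$ is connected, $M$ lies inside a single third-colour component of $G$, as required.

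The ``In particular'' clause then drops out by inclusion--exclusion: applying the main statement to the two given components $F_1,F_2$, the failure of the matching alternative gives $\min(|V_1\setminus V_2|,|V_2\setminus V_1|)<N/4$, and if, say, $|V_1\setminus V_2|<N/4$ while $|V_1|\ge N/2$, then $|V_1\cap V_2|=|V_1|-|V_1\setminus V_2|>N/2-N/4=N/4$. I foresee no serious obstacle: the argument is powered by a single colour-exclusion observation, and everything else is a bookkeeping appeal to Lemma~\ref{l:match1}(iii). The only delicate point is the extremal case $|A|=|B|=N/4$, where the strict inequality $|V_2|>2b$ in Lemma~\ref{l:match1}(iii) looks tight; this is absorbed by the strictness built into the definition of ``$N/8$-dense'', so $H$ has, in fact, strictly more than $N/8$ neighbours per vertex on the other side and the Hall-type argument of Lemma~\ref{l:match1} still goes through.
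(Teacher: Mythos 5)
Your proposal is correct and is essentially the paper's argument: the paper dispatches this lemma as a direct consequence of Lemma~\ref{l:match1}(iii), with exactly your colour-exclusion observation (all edges between $V_1\setminus V_2$ and $V_2\setminus V_1$ must be of the third colour) left implicit. Your explicit treatment of the boundary case $|A|=|B|=N/4$ via the strict inequality in the definition of $b$-dense is a welcome detail the paper omits.
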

	
	\begin{proof} This is a direct consequence of Lemma~\ref{l:match1}(iii).
	\end{proof}
	
	Our next result gives a simple structural characterization of graphs without large matchings.
	
	\begin{lemma}\label{tutte}
		If a graph $F=(V,E)$ on $N$ vertices 
		contains no matching saturating at least $n$ vertices, then
		there exists a partition $V=S\cup T\cup U$ of the vertex set of $F$ such that the following holds.
		\begin{enumerate}
			\item[(i)] The subgraph induced in $F$ by $T$ has maximum degree at most $\sqrt{N}-1$.
			\item[(ii)] There are no edges between the sets $T$ and $U$.
			\item [(iii)]
			\begin{equation*}\label{eq:tutte1}
			|U|+2|S|=|S|+|V|-|T|<n+\sqrt{N}.
			\end{equation*}
			In particular,
			\begin{equation*}\label{eq:tutte2}
			|T|+|U|/2>|V|-n/2-\sqrt{N}/2.
			\end{equation*}
			\item [(iv)] $|T|\le |V|-n/2$.
		\end{enumerate}
	\end{lemma}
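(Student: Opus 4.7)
The plan is to build the partition from the Gallai--Edmonds decomposition of $F$ and then to fine-tune the split between $T$ and $U$ by the sizes of the components of $F - S$.

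First, I will invoke the Gallai--Edmonds structure theorem to set $S := A(F)$. Because every maximum matching of $F$ matches $S$ injectively into the odd components of $F - S$, we have $|S| \le \nu(F)$; and since $\nu(F) < n/2$ by hypothesis, we obtain the key bound $|S| < n/2$. Writing $q$ for the number of odd components of $F - S$, the Tutte--Berge formula yields
\[
q - |S| \;=\; N - 2\nu(F) \;\ge\; N - n + 1.
\]

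Next, call a component of $F - S$ \emph{small} if it has at most $\sqrt{N}$ vertices, and \emph{large} otherwise. Initially set $T_0$ to be the union of the small odd components of $F - S$ and $U_0$ the union of all remaining components (the large odd ones together with all even ones). Conditions (i) and (ii) are then immediate: each vertex of $T_0$ lies in a component of size at most $\sqrt{N}$, and $T_0, U_0$ are vertex-disjoint unions of full components of $F - S$. For (iii), I count odd components: each small odd component contributes at least one vertex to $|T_0|$, while each large odd component contributes more than $\sqrt{N}$ vertices to $|U_0|$, so
\[
|T_0| + |U_0|/\sqrt{N} \;\ge\; q \;\ge\; N - n + 1 + |S|.
\]
Using $|U_0| \le N$, this gives $|T_0| \ge N - n + 1 + |S| - \sqrt{N}$, i.e.\ $|U_0| + 2|S| \le n - 1 + \sqrt{N}$, establishing (iii) for the initial triple.

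Finally, to secure (iv), I will migrate small odd components from $T$ to $U$ one at a time, stopping at the first moment $|T| \le N - n/2$; no migration is needed if $|T_0| \le N - n/2$ already. Since each migrated component has size at most $\sqrt{N}$, the final $|T|$ lies in the interval $(N - n/2 - \sqrt{N},\, N - n/2]$, and combined with $|S| < n/2$ this gives $|U| + 2|S| = N + |S| - |T| < n + \sqrt{N}$, so (iii) is preserved, while (i) and (ii) plainly survive since we only remove whole components from $T$. The main obstacle is precisely the simultaneous reconciliation of (iii) and (iv): (iii) forces $|T|$ to be roughly $N - n$ or larger, whereas (iv) caps it at $N - n/2$; the slack $|S| < n/2$ coming from Gallai--Edmonds, together with the $\sqrt{N}$-granularity of component sizes, is exactly what keeps the admissible window for $|T|$ non-empty.
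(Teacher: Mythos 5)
Your proof is correct, but it takes a different route from the paper's. The paper does not prove (i)--(iii) at all: it cites them as ``a well known consequence of Tutte's Theorem'' (referring to \cite{LR}), and then enforces (iv) by a crude truncation --- if $|T|>|V|-n/2$ it keeps a subset $T'\subseteq T$ of size $|V|-\lceil n/2\rceil$, sets $U'=\emptyset$ and dumps everything else into $S'$, so that $|U'|+2|S'|=2\lceil n/2\rceil<n+\sqrt N$ holds with no information about $|S|$ whatsoever. You instead give a self-contained argument: taking $S=A(F)$ from the Gallai--Edmonds decomposition, splitting the components of $F-S$ into small (at most $\sqrt N$ vertices) and large ones, and combining the deficiency identity $q-|S|=N-2\nu(F)\ge N-n+1$ with the count $q\le |T_0|+|U_0|/\sqrt N\le |T_0|+\sqrt N$ to get (iii); this is essentially the standard argument the paper outsources to \cite{LR}, and it is sound. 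Your treatment of (iv) is genuinely different from the paper's: rather than absorbing the excess of $T$ into $S$, you migrate whole small components from $T$ to $U$, which preserves (i)--(ii) trivially and preserves (iii) because the stopping rule leaves $|T|>N-n/2-\sqrt N$ while Gallai--Edmonds supplies the extra bound $|S|=|A(F)|\le\nu(F)<n/2$ --- a fact the paper's truncation never needs. Both fixes are valid; yours buys a self-contained proof in which $S$, $T$, $U$ retain their structural meaning, while the paper's is shorter and independent of any bound on $|S|$. (Both your argument and the paper's implicitly assume $N\ge n/2$, so that the target $|T|\le N-n/2$ is attainable; this is harmless, since in every application of the lemma $N\ge 2n$.)
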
  
	
	\begin{proof} The fact that there exists a partition $V=S\cup T\cup U$ for which 
		(i)-(iii) hold is a well known consequence of Tutte's Theorem (see, for instance, \cite{LR}).
		If $|T|>|V|-n/2$, then  choose 
		$T'\subseteq T$ such that $|T'|=|V|-\lceil n/2\rceil$ and put $S'=V\setminus T'$ and $U'=\emptyset$. 
		The new partition $V=S'\cup T'\cup U'$ clearly fulfils (i)-(iv).
	\end{proof}
	
Our next, somewhat technical,  result  guarantees the existence of a large matching in a dense subgraph of a specific graph.  
	Let  $\hH=\hH(m_0;m_1,m_2,m_3)$ denote
	the graph whose vertex set is partitioned into disjoint 
	sets $Z_0, Z_1,Z_2, Z_{3}$, of sizes $m_0, m_1, m_2, m_3$, respectively,
	and the edges of $\hH$ are those joining 
	two different sets of this $4$-bipartition and those contained in the set $Z_0$.
	The following result is proved in~\cite{LR} as a direct consequence of Lemma~\ref{tutte}.
	
	\begin{lemma}\label{l:m2}
		Let $G$ be a $b$-dense subgraph of $\hH(m_0;m_1,m_2,m_3)$, $\hat m= m_0+m_1+m_2+m_3$, 
		and let 
		$m_1, m_2,  m_3\le 2b+2\sqrt{\hat m}$ and $b\ge 6\sqrt{\hat m}$.  
		Then $G$ contains a matching $M$ saturating at least $L-5\sqrt{\hat m}$ 
		vertices, where 
		\begin{align*}
		L=L(m_0;m_1,m_2,m_3)&\ge \min\{\hat m,m_0+\hat m-2b\}.
		\end{align*}
		
		If, furthermore, $m_0>0$ and $L\ge 2b+3\shm$, then $G$ is connected. \qed
	\end{lemma}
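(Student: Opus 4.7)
The plan is to argue by contradiction. Suppose $G$ has no matching saturating $n=L-5\sqrt{\hat m}$ vertices; applying Lemma~\ref{tutte} with $N=\hat m$ gives a partition $V(G)=S\cup T\cup U$ satisfying (i)--(iv). The extra leverage comes from the $\hH$-structure: every non-edge of $G$ is still an edge of $\hH$, and each vertex has fewer than $b$ incident non-edges. Combining this observation with (i) and (ii) yields, for every $v\in T$, the key estimates
\[ |N_{\hH}(v)\cap U|<b \quad\text{and}\quad |N_{\hH}(v)\cap T|<b+\sqrt{\hat m}. \]

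The heart of the argument is a case split on how $T$ meets the parts $Z_0,Z_1,Z_2,Z_3$. If $T\cap Z_0\ne\emptyset$, a vertex $v\in T\cap Z_0$ is $\hH$-adjacent to every other vertex of $V$, so the displayed bounds collapse to $|U|<b$ and $|T|<b+\sqrt{\hat m}$. If $T$ meets at least two parts among $Z_1,Z_2,Z_3$, picking representatives from two different such parts yields caps on $|T|$ and $|U|$ of the same order $b+\sqrt{\hat m}$. In each of these subcases I rewrite (iii) as $2|T|+|U|>2\hat m-L+4\sqrt{\hat m}$ and combine with $L\le\min\{\hat m,m_0+\hat m-2b\}$ and $b\ge 6\sqrt{\hat m}$ to reach a contradiction after short arithmetic.

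The main obstacle is the remaining case $T\subseteq Z_j$ for a single $j\ge 1$. Here $Z_j$ is $\hH$-independent, so (i) is vacuous and the only direct bounds are $|T|\le m_j\le 2b+2\sqrt{\hat m}$ and $|U\setminus Z_j|<b$, while $U\cap Z_j$ can be as large as $m_j-|T|$. Writing $|U|\le(m_j-|T|)+b-1$ and substituting into (iii) forces
\[ 2m_j+b\ge 2\hat m-L+4\sqrt{\hat m}+1; \]
when $L=m_0+\hat m-2b$ this rearranges to $2m_j-(m_1+m_2+m_3)\ge b+4\sqrt{\hat m}+1$, an inequality that contradicts $m_j\le 2b+2\sqrt{\hat m}$ and $b\ge 6\sqrt{\hat m}$ only after carefully tracking the $5\sqrt{\hat m}$ slack built into $n$. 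The parallel branch $L=\hat m$ is easier and is handled along the same lines.

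For the connectivity assertion, once the matching of size at least $(L-5\sqrt{\hat m})/2$ is in hand, it must sit inside a single component $C$. Because $L\ge 2b+3\sqrt{\hat m}$ and $m_0>0$, $C$ is large enough to contain a vertex of $Z_0$; such a vertex is $\hH$-adjacent to every other vertex of $V$, so any hypothetical vertex outside $C$ would have more than $b-1$ missing $\hH$-edges towards $Z_0\cap C$, contradicting $b$-density. Hence $G$ is connected.
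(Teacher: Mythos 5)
Your overall route -- assume there is no matching saturating $n=L-5\sqrt{\hat m}$ vertices, apply Lemma~\ref{tutte} with $N=\hat m$, and play the partition $S\cup T\cup U$ against the $b$-density of $G$ inside $\hH$ -- is exactly the intended one (the paper does not reprove this lemma; it quotes it from \cite{LR}, where it is derived precisely from Lemma~\ref{tutte}). The problem is that your sketch does not close in the case you yourself call the main obstacle, $T\subseteq Z_j$ for a single $j\ge 1$. The inequality you end up with there, $2m_j-(m_1+m_2+m_3)\ge b+4\sqrt{\hat m}+1$, is \emph{not} in conflict with the hypotheses: take $m_2=m_3=0$ and $m_j=m_1$ anywhere in the range $b+5\sqrt{\hat m}\le m_1\le 2b+2\sqrt{\hat m}$, with $b\ge 6\sqrt{\hat m}$. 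The discrepancy is of order $b$, not $\sqrt{\hat m}$, and this is exactly the regime of the extremal configurations (one large independent part of size about $2b$, with $T\cup U$ essentially inside it and $S$ essentially the rest), so no amount of ``tracking the $5\sqrt{\hat m}$ slack'' can rescue that particular chain. What is missing is one more use of the density condition, at vertices of $U$ rather than $T$: if some $u\in U\setminus Z_j$ exists, then all of $T\subseteq Z_j$ lies in $N_{\hH}(u)$ and is non-adjacent to $u$, forcing $|T|\le b-1$; otherwise $U\subseteq Z_j$ and $|T|+|U|\le m_j$. Feeding either branch into the first form of (iii), namely $2|S|+|U|<n+\sqrt{\hat m}$ together with $|U\setminus Z_j|\le b-1$ and $|U\cap Z_j|\le m_j-|T|$, does produce a genuine contradiction (in the branch $|T|\le b-1$ one gets $m_0+m_2+m_3+4\sqrt{\hat m}<m_0$, in the branch $U\subseteq Z_j$ one gets $m_1\ge 2b+4\sqrt{\hat m}$); without this extra step the case is simply open.

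Two smaller points. In the case $T\cap Z_0\neq\emptyset$ the separate bounds you display, $|U|<b$ and $|T|<b+\sqrt{\hat m}$, only give $2|T|+|U|<3b+2\sqrt{\hat m}$, which does not contradict $2|T|+|U|>2b+m_1+m_2+m_3+4\sqrt{\hat m}$ when $b$ is large compared with $m_1+m_2+m_3$; you need the combined estimate $|T|+|U|\le b+\sqrt{\hat m}$ obtained by counting all non-neighbours and $T$-neighbours at one vertex $v\in T\cap Z_0$, and in the ``two parts'' case you must take the representative in the \emph{smaller} of the two parts to get $|T\cup U|$ bounded by $m_{j_2}+b+\sqrt{\hat m}$. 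Finally, your connectivity argument tacitly assumes $|Z_0\cap C|\ge b$, while only $m_0>0$ is given; the correct argument notes that the component containing a $Z_0$-vertex has at least $\hat m-b$ vertices (hence coincides with the component of $M$), and that any further component would force $m_j\ge\hat m-2b+2$ for every part it meets, which is incompatible with $L\ge 2b+3\shm$ (i.e.\ with $\hat m\ge 2b+3\shm$ and $m_0\ge 4b+3\shm-\hat m$) by a short count.
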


	\medskip
	
	\section{A large monochromatic component}\label{sec:large}
	
	We first study the case when the 3-coloured $G$ contains a  large monochromatic component with at least $ 3N/4$ vertices.
	The main result of this section is the following lemma. Here and below we assume that all results and inequalities are true only
	for $N$ which is large enough. 
	
	\begin{lemma}\label{l:giant}
		Let $G=(V,E)$ be a graph which is $b$-dense in $K_N$, where $N=(2+\eta)n$ and $b<(1/8-3\eta)N<n/4-\eta N$,  for some $\eta>0$.
		For each colouring of the edges of $G$ with three colours which results in  
		a monochromatic component $F_1=(V_1,E_1)$ on at least $3N/4$ vertices, there  exists
		a matching saturating at least $n$ vertices contained in 
		a monochromatic component in one of the colours.
	\end{lemma}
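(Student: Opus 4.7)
The plan is to argue by contradiction, assuming that no monochromatic component of $G$ contains a matching saturating $n$ vertices. Since in particular $F_1$ fails to have such a matching, Lemma~\ref{tutte} applied to $F_1$ yields a partition $V_1 = S \cup T \cup U$ satisfying (i)--(iv); write $W := V \setminus V_1$, so $|W| \le N/4$. Two structural observations drive the argument: every $G$-edge between $T$ and $U$ is coloured $2$ or $3$ (otherwise it would be an $F_1$-edge prohibited by Lemma~\ref{tutte}(ii)), and every $G$-edge between $V_1$ and $W$ is coloured $2$ or $3$ (since $V_1$ is the vertex set of the colour-$1$ component~$F_1$). Together with the $b$-density of~$G$, this makes the bipartite graph $B := G[T \cup W,\,U]$ a $b$-dense subgraph of $K_{|T\cup W|,|U|}$ in which every edge is coloured $2$ or $3$.

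A direct computation using Lemma~\ref{tutte}(iii)--(iv), $|V_1| \ge 3N/4$, $N = (2+\eta)n$, and $b < (1/8-3\eta)N$ gives the estimates
\[
|U| < n + \sqrt{N}, \qquad |T \cup W| > (1+\eta)n - \sqrt{N}, \qquad |U| + |T\cup W| > (3/2+\eta)n - \sqrt{N}.
\]
For each $u \in U$, the bound $d_2(u) + d_3(u) \ge |T\cup W|-b$ allows us to partition $U = U_2 \cup U_3$ according to the heavier colour at each vertex; the larger side, say $U_2$ with $|U_2|\ge |U|/2$, consists of vertices of colour-$2$ degree at least $(|T\cup W|-b)/2$ into $T\cup W$. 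A K\"onig-type vertex-cover argument on the colour-$2$ subgraph of $B$ restricted to $U_2$ then yields a colour-$2$ matching of size at least $\min(|U_2|,(|T\cup W|-b)/2)$, and Lemma~\ref{l:match1}(iii) applied to the underlying $b$-dense bipartite graph forces the supporting bipartite subgraph to be connected, locating the matching inside a single colour-$2$ component.

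The main obstacle is that this direct bound on the matching size, $\min(|U_2|,(|T\cup W|-b)/2)$, turns out to be roughly $3n/8$ edges and therefore saturates only about $3n/4$ vertices, which is short of the target $n$. To close this gap one has to enlarge the matching using colour-$2$ (or colour-$3$) edges inside $T$ itself, where Lemma~\ref{tutte}(i) bounds the colour-$1$ maximum degree by $\sqrt{|V_1|}-1$, so $G[T]$ is essentially two-coloured with colours $2$ and $3$. The plan is therefore to apply Lemma~\ref{l:m2} to the non-colour-$1$ graph $G^* := (G\setminus F_1)[T\cup U\cup W]$ viewed as a subgraph of $\hH(|T|;|U|,|W|,0)$ (density parameter $b' \le 3b + \sqrt{N}$), producing a single large connected subgraph with matching of size comparable to $(3/2+\eta)n$; then a further pigeonhole between colours $2$ and $3$ on the matching edges, supplemented by Lemma~\ref{l:match2}---which forces any two monochromatic components of size $\ge N/2$ to overlap in more than $N/4$ vertices---allows one to locate a monochromatic matching saturating $\ge n$ vertices inside a single colour-$c$ component. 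The delicate point, and the reason the proof requires the $\eta$-slack in $b < (1/8-3\eta)N$, is exactly this passage from "heavy on average across two colours" to "large matching in one colour within one component".
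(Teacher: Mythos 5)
Your opening reduction is the same as the paper's: assume no monochromatic component has a matching saturating $n$ vertices, apply Lemma~\ref{tutte} to $F_1$, and observe that all edges among $T_1$, $U_1$ and $V\setminus V_1$ (apart from a colour-1 graph of maximum degree $<\sqrt N$ inside $T_1$) carry colours $2$ or $3$. From that point on, however, your argument is a plan whose central step does not work. First, Lemma~\ref{l:m2} cannot be applied to $(G\setminus F_1)[T\cup U\cup W]$ viewed as a subgraph of $\hH(|T|;|U|,|W|,0)$: that lemma requires $m_1,m_2,m_3\le 2b+2\sqrt{\hat m}$, while here $m_1=|U|$ may be as large as roughly $n\approx N/2$, far above $2b<N/4$; the statement that plays this role in the paper (Lemma~\ref{l:m2a}) is \emph{not} an instance of Lemma~\ref{l:m2} but a separate two-colour result with its own long proof. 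Second, your passage from a two-coloured matching to a monochromatic one fails both quantitatively and structurally: a matching in colours $\{2,3\}$ saturating about $(3/2+\eta)n$ vertices gives, after a pigeonhole on its edges, only about $(3/4)n$ saturated vertices in one colour, short of the target $n$, and there is no reason those edges lie in a single monochromatic component. Likewise, invoking Lemma~\ref{l:match1}(iii) for the $b$-dense bipartite graph $B$ only yields connectivity of the \emph{union} of colours $2$ and $3$; it does not place your colour-2 matching inside one colour-2 component. You explicitly flag this passage as ``the delicate point'', but it is exactly the content of the lemma, and neither the pigeonhole nor Lemma~\ref{l:match2} closes it.

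For comparison, the paper resolves this step by first proving (Claim~\ref{cl2}) that $W=T_1\cup U_1\cup(V\setminus V_1)$ is covered by two monochromatic components $F_2$, $F_3$ of the two remaining colours, then applying Lemma~\ref{tutte} a \emph{second} time, to $F_2$, to obtain a partition $S_2\cup T_2\cup U_2$, and finally building a matching entirely in the third colour as a union $M'\cup M''$: a Hall-type matching $M'$ covering $U_1\setminus V_2$ (or $A_3$), and a matching $M''$ obtained from Lemma~\ref{l:m2} applied to the sets $Z_0,Z_1,Z_2,Z_3$ (or their primed variants) extracted from the two Tutte decompositions, with a case distinction according to whether $|S_1|\ge N/8$. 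The second Tutte decomposition and the claims bounding $|Z_0|,|Z_1|,|Z'_2|,|Z'_3|$ by $N/4$ are what make Lemma~\ref{l:m2} applicable and keep everything inside the single component $F_3$; this machinery is absent from your sketch, so the proof as proposed has a genuine gap.
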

	
	Let us assume that none of monochromatic components of $G$ contains a matching saturating $n$ vertices.
	In particular, $F_1$ contains no such matching, so we can use Lemma~\ref{tutte} to find a partition 
	$V_1=S_1\cup T_1\cup U_1$ which fulfils all conditions (i)-(iv) listed in this result. 
	We will show that in the subgraph  of $G$ induced by 
		$$W=(V_1\setminus S_1) \cup (V\setminus V_1)=T_1\cup U_1\cup (V\setminus V_1)$$
 there exists a monochromatic component in one of the two other colours with 
	a matching saturating at least $n$ vertices.
	
	Let us remark first that 
	\begin{equation*}\label{e:t1}
	\begin{aligned}
	2|S_1|+|U_1|&\le n+\sqrt{N},\\	
	|W|=|V|-|S_1|&\ge N+|U_1|/2-n/2-\sqrt N/2\\
	&>3N/4+|U_1|/2,\\
	|T_1|&= |V|-|V\setminus V_1|-|S_1|-|U_1|>N/2-|V\setminus V_1|\\
	&>N/4,
	\end{aligned}
	\end{equation*}
	where here and below $b<N/8-\eta N$.  	
	Note that all edges between sets $T_1$, $U_1$, and  $V\setminus V_1$ are either of the second or the third colour, 
	and the subgraph spanned in $T_1$ by the edges of the first colour has maximum degree at most $\sqrt{N}$.
	Thus, if we set $m_0=|T_1|$, $m_1=|U_1|$, and $m_2=|V\setminus V_1|$ and denote the graph spanned by the edges of the second and the third colour
	contained in $W$ by $\tH$, then,  
	in order to show Lemma~\ref{l:giant}  it is enough to  prove the following result. 
	
	\begin{lemma}\label{l:m2a}
		Let $\tH$ be a $\hb$-dense subgraph of $\hH=\hH(m_0;m_1,m_2, m_3)$ such that $m_3=0$, $b'=b+\sqrt N\le N/8-6\sqrt{N}$, and 
		\begin{enumerate}
			\item[(i)] $m_0+m_1+m_2\le N$,
			\item[(ii)] $m_0+m_1/2+m_2\ge 3N/4$,
			\item[(iii)] $m_2\le  N/4$,
			\item[(iv)] $m_0> N/2-m_2\ge N/4$.
		\end{enumerate}
		Then each 2-colouring of edges of $\tH$ leads to a monochromatic component which contains a matching saturating at least $N-3\sqrt N$ vertices.
	\end{lemma}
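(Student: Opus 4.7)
The plan is to argue by contradiction, assuming that no monochromatic component of $\tH$ contains a matching saturating $N-3\sqrt N$ vertices, and derive a contradiction with the hypotheses.

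First I would show that one of the two colours, say red, must contain a component $F$ on at least $\hat m-2b'$ vertices (with $\hat m=m_0+m_1+m_2$). Any vertex $v\in Z_0$ has $\tH$-degree at least $\hat m-1-b'$, so by a majority argument either its red or its blue component $R^{\ast}$ already spans more than $(\hat m-1-b')/2>N/4>2b'$ vertices; WLOG $R^{\ast}$ is red. If $|R^{\ast}|<\hat m-2b'$, then every vertex in $V(\tH)\setminus R^{\ast}$ sends all its $\tH$-edges to $R^{\ast}$ in blue (a red one would merge it into $R^{\ast}$), and by the $b'$-density of $\tH$ each such vertex has at least $|R^{\ast}|-O(b')$ such edges. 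A common-neighbour argument inside $R^{\ast}$ (using $|R^{\ast}|>2b'$) then gathers all of $V(\tH)\setminus R^{\ast}$ together with most of $R^{\ast}$ into a single blue component of size at least $\hat m-O(b')$. In either case we obtain a giant monochromatic component, which we take henceforth to be red and call $F$.

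Second, I would apply Lemma \ref{tutte} to $F$ with parameter $n=N-3\sqrt N$, obtaining a partition $V(F)=S\cup T\cup U$ satisfying (i)--(iv). By (i) and (ii) there are no red edges of $F$ between $T$ and $U$, and the red subgraph inside $T$ has maximum degree at most $\sqrt{|V(F)|}\le \sqrt N$. Therefore essentially every $\tH$-edge between $T$ and $U$ is blue, and the blue bipartite subgraph between $T$ and $U$ is roughly $(b'+\sqrt N)$-dense in the corresponding bipartite part of $\hH$ (up to the at most $|T\cap Z_i|\cdot|U\cap Z_i|$ non-edges within $Z_i$ for $i=1,2$). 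Combining Lemma \ref{tutte}(iii) with $|V(F)|\ge\hat m-2b'\ge 3N/4-2b'$ and the hypothesis $m_0+m_1/2+m_2\ge 3N/4$, a short calculation yields $|T|+|U|/2\ge N/4+\sqrt N-O(b')$; together with $|T|\le |V(F)|/2\le N/2$, this is enough to apply Lemma \ref{l:match1}(iii) to the blue bipartite graph between $T$ and $U$. The resulting connected blue bipartite subgraph contains a matching saturating $2\min(|T|,|U|)\ge N-3\sqrt N$ vertices, placed inside a single blue component of $\tH$, a contradiction.

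The main obstacle will be the degenerate case in which $|U|$ is much smaller than $|T|$ (for instance $U=\emptyset$), where the bipartite matching between $T$ and $U$ alone is too short. In that regime one has to augment the above bipartite graph with the at most $2b'$ vertices of $V(\tH)\setminus V(F)$, all of whose edges to $V(F)$ are blue by the argument of step one, and with the blue edges between $S$ and $T$, while verifying that all of these additional blue edges still lie in the same blue component. The accounting uses (iv) of Lemma \ref{tutte} to bound $|T|$ from above, the $b'$-density of $\tH$ restricted to $Z_0$ together with $m_0>N/4$ to keep $S$ and $V(\tH)\setminus V(F)$ blue-connected to $T\cup U$, and the hypotheses $m_0+m_1/2+m_2\ge 3N/4$ and $m_2\le N/4$ to lower-bound $|S|+|T|+|U|$ and hence produce the required blue matching of size at least $(N-3\sqrt N)/2$.
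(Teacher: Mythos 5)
Your opening step --- that one of the two colours must contain a component on at least $\hat m-2b'$ vertices --- is false under the hypotheses of Lemma~\ref{l:m2a}, and your argument for it breaks precisely because $Z_1$ and $Z_2$ are independent sets of $\hH$: a vertex of $Z_i$ outside $R^{\ast}$ has no edges at all to $R^{\ast}\cap Z_i$, so it need not have $|R^{\ast}|-O(b')$ blue edges into $R^{\ast}$. Concretely, take $m_1=m_3=0$, $m_2=N/4$, $m_0=N/2+1$, and $\tH=\hH$ (which is $b'$-dense for any admissible $b'$, e.g.\ of order $\sqrt N$); colour all edges inside $Z_0$ red, split $Z_2$ into halves $C,C'$ of size $N/8$, and colour all $Z_0$--$C'$ edges red and all $Z_0$--$C$ edges blue. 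Conditions (i)--(iv) hold, yet the largest component of either colour has $5N/8+1$ vertices, well below $\hat m-2b'\approx 3N/4$. What is actually true (and what the paper proves, with real effort, as Claim~\ref{cl2}) is only that the vertex set is covered by two monochromatic components of \emph{different} colours. The same example also shows that the numerical target you chase cannot be met literally: $\hat m$ may be only about $3N/4$, so no matching saturates $N-3\sqrt N$ vertices; the bound in the statement is evidently a misprint for a quantity of order $N/2$, which is what the paper's proof produces and what Lemma~\ref{l:giant} requires.

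Even granting a near-spanning red component $F$, the endgame does not close. Lemma~\ref{tutte} gives no lower bound on $\min(|T|,|U|)$ --- $U$ may be empty --- so the matching of size $2\min(|T|,|U|)$ between $T$ and $U$ is not available; moreover the blue bipartite graph between $T$ and $U$ need not meet the density hypothesis of Lemma~\ref{l:match1}(iii), since $m_1$ can be as large as $N/2$ and the missing pairs inside $Z_1$ (and $Z_2$) can cost a vertex $\Theta(N)$ neighbours, not $O(b'+\sqrt N)$. The ``degenerate case'' you defer is in fact the main case, and your sketch omits the substance of the paper's argument there: the paper first covers $W$ by two components of different colours (Claim~\ref{cl2}), applies Lemma~\ref{tutte} to the second-colour component, bounds the pieces $Z_0,Z_1,Z_2,Z_3$ (and their primed variants) built from \emph{both} partitions by exhibiting explicit third-colour matchings (Claims~\ref{claim5}--\ref{claim4}), absorbs $U_1\setminus V_2$ (resp.\ $A_3$) by Hall-type matchings in a case split on $|S_1|$, and finally invokes Lemma~\ref{l:m2}, whose connectivity clause certifies that the matching lies in a \emph{single} component $F_3$. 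Your augmentation by blue $S$--$T$ edges and by vertices outside $F$ has no analogous connectivity check: those blue edges need not lie in the same blue component as your $T$--$U$ matching. So while the first instinct (Tutte decomposition of one colour, matching in the other) matches the paper's, both the structural step and the counting/connectivity steps have genuine gaps.
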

	
	\begin{proof}
		In what follows we denote by $\tH_i=G_i[W]$, for $i=2,3$, the graphs induced in $\tH$ by the second and the third colours, respectively.
		Let us start with a number of observations. 
		
		\begin{claim}\label{cl11}
			Suppose that in either the second or the third  colour, say the second one, there exists a union of  components $F'_2=(V'_2,E'_2)$ such that 
			$|W\setminus V'_2|\ge N/4$ and $V'_2\cap T_1 \neq \emptyset$. Then all vertices of $V'_2\cap T_1$ are contained in the same 
			component $F_3=(V_3,E_3)$ of the third colour such that $|T_1\setminus  V_3|\le N/8$. 
		\end{claim}
		
		\begin{proof}
			Note that each pair of vertices  $v,w \in V'_2\cap T_1$ must share a neighbour of
			the third colour  in  $W\setminus V'_2$, and so the set  $V'_2\cap T_1$ is contained in the same component $F_3=(V_3,E_3)$ of $\tH_3$. 
			Let $v\in V_3\cap T_1$. Since every neighbour of $v$ in $T_1$ belongs to $V_3$, $F_3$ covers all but  at  most  $b'<N/8$ vertices of $T_1$.
		\end{proof}  
		
		\begin{claim}\label{cl2} The set  $W$ of vertices of $\tH$ can be covered by two monochromatic 
			components of different colours. Furthermore, either $T_1$ 
			is contained in one of them, or each of them   
			covers all but at most $b'$ vertices of $W$. 
		\end{claim}
		
		\begin{proof}
			Let us consider first the case when one monochromatic component, say $F_2=(V_2,E_2)$, contains all vertices of $T_1$. 
			Since $\tH$ is $b'$-dense, (iv) implies that each pair of vertices from $W\setminus V_2$ has in $T_1$ a common neighbour. Since all 
			edges joining $W\setminus V_2$ with $T_1$ are of the third colour, all vertices of   $W\setminus V_2$  belongs to 
			one monochromatic component $F_3=(V_3,E_3)$ of the third colour. 	
			
			Now let us suppose that there are several components of $\tH_2$ which intersect $T_1$. Then there exists a component
			$F'_2=(V'_2,E'_2)$ in $\tH_2$ which contains at most $|W|/2$ vertices. Using Claim~\ref{cl11}  we infer that
			there exists a component $F_3=(V_3,E_3)$ 
			of the third colour, such that $V'_2\cap T_1\subseteq V_3\cap T_1$ and $|T_1\setminus  V_3|\le N/8$. The case  $T_1\subseteq V_3$ has been treated above so 
			we assume that  $T_1\setminus V_3\neq \emptyset$.  
			Note also that if $|W\setminus V_3|\ge N/4$, then, again by Claim~\ref{cl11}, there exists 
			a component $F_2=(V_2,E_2)$ in $\tH_2$  such that $V_3\cap T_1\subseteq V_2\cap T_1$ and so 
			$$|V_2\cap T_1|\ge |V_3\cap T_1|\ge |T_1|-N/8\ge N/8>b'.$$ 
But this means that every vertex $v\in T_1\setminus V_3$ has in $V_3\cap T_1\subseteq V_2$ a neighbour of the second colour. Consequently, $T_1\subseteq V_2$ and, again, 
			this case is reduced to the one we have already considered. 
			
			Thus, it is enough to study the case when  $|W\setminus V_3|< N/4$. Each pair of vertices 
			$v,w\in T_1\setminus V_3$ has a common neighbour in $V_3$, so all such vertices lie in one  
			component  $F_2=(V_2,E_2)$ of the second colour and so  $T_1\subseteq V_2\cup V_3$. 
			Since all neighbours of a vertex $v\in T_1\setminus V_2$ in $T_1$ belong to $F_3$,  
			we have  $|T_1\setminus V_2|<N/8$.
			Observe  that there are no edges between $V_2\setminus V_3$ and 
			$V_3\setminus V_2$. Thus, considering vertices $v\in (V_2\cap T_1)\setminus V_3$, 
			and $w\in (V_3\cap T_1)\setminus V_2$, which both have in $\tH$ degree at least $|W|-b'$, we infer that 
			\begin{equation*}\label{eqqq}
			|V_2\setminus V_3|, |V_3\setminus V_2|\le b'<N/8.
			\end{equation*}
			
			In order to complete the proof  we need to show that components $F_2=(V_2,E_2)$ and $F_3=(V_3,E_3)$ cover not only $T_1$ but the whole set~$W$.
			
			Suppose that it is not the case and 
			$|W\setminus (V_2\cup V_3)|>0 $.
			From the fact that  $|W|=m_0+m_1+m_2\ge m_0+m_1/2+m_2\ge 3N/4$, we get 
			\begin{equation*}\label{eq555}
			\begin{aligned}
			|W\setminus (V_2\cup V_3)|+|V_2\cap V_3|&= |W|- |V_2\setminus V_3|-|V_3\setminus V_2|\\
			&\ge |W|-2b'> N/2 .
			\end{aligned}
			\end{equation*}
			There are no edges between $V_2\cap V_3$ and $W\setminus (V_2\cup V_3)$. Moreover, since 
			$$|V_2\cap V_3\cap T_1|\ge |T_1|-	|V_2\setminus V_3|-|V_3\setminus V_2|>0,$$ we have  
			$|W\setminus (V_2\cup V_3)|<b'$. 
			Consequently,  
\begin{equation}\label{eq55}
|V_2\cap V_3|\ge N/2-b'.
\end{equation}
			
			We claim that $W\setminus (V_2\cup V_3)\subseteq U_1$.
			Let us suppose that this is not the case and there exists 
			$v\in (V\setminus V_1)\setminus (V_2\cup V_3)$. Then 
			\begin{equation*}\label{eq6a}
			|(T_1\cup U_1)\cap V_2\cap V_3|\le b' , 
			\end{equation*}
			and so, by \eqref{eq55}, 
			\begin{align*}\label{eq6b}
			| (V\setminus V_1)\cap V_2\cap V_3|&= |V_2\cap V_3|-|(T_1\cup U_1)\cap V_2\cap V_3|\\
			&>N/2-2b'	> N/4, 
			\end{align*}
			while the condition (iii)  of Lemma~\ref{l:m2a} states that $m_2=|V\setminus V_1|\le N/4$. 
			
Thus $W\setminus (V_2\cup V_3)\subseteq U_1$. 
Hence, if $W\setminus (V_2\cup V_3)\neq \emptyset$, then
	\begin{equation*}\label{eq6}
			|(T_1\cup (V\setminus V_1))\cap (V_2\cap V_3)|\le b' , 
		\end{equation*}
and 
\begin{align*}
	|T_1\cup (V\setminus V_1)|&\le |(T_1\cup (V\setminus V_1))\cap (V_2\cap V_3)|+|V_2\setminus V_3|+|V_3\setminus V_2|\\ 
	&\le 3b'< 3N/8. 
\end{align*}			
while the condition (iv) of Lemma~\ref{l:m2a} states that
$$|T_1\cup (V\setminus V_1)|=m_0+m_2>N/2.$$
			This final contradiction shows that $W=V_2\cup V_3$.
		\end{proof}
		
		From Claim~\ref{cl2} we know that $W$ can be covered by two monochromatic components 
		$F_2=(V_2,E_2)$ and  $F_3=(V_3,E_3)$  of different colours. 
		We may and shall  assume that $|V_2\cap T_1| \ge |V_3\cap T_1|$. 
		Note also that by Claim~\ref{cl2} each vertex from $T_1\setminus V_2$ has at least $|V_2|-b'>|W|-2b'>N/2$  neighbours of the third colour in $V_2$. 
		Since all the vertices of $T_1\setminus V_2$ belong to $V_3$, due to Lemma~\ref{l:ind}, we can recolour
		all edges from $T_1\setminus V_2$ to $V_3$ by the third colour, not changing the size of the largest matching contained in $F_3$, provided it is smaller than 
		$n<N/2$. Thus, from now on, we assume that all such edges are of the third colour. 
		We  recall  that we have also claimed that there are no edges of the first 
		colour contained in $T_1$ from $G$, compensating this fact by replacing $b$ by $b'=b+\sqrt{N}$. 
		
		Since we have assumed that no monochromatic component contains a matching saturating at least $n$ vertices, using 
		Lemma~\ref{tutte} we can partition the set of vertices of $F_2=(V_2,E_2)$ into three sets $S_2\cup T_2\cup U_2$ for which the conditions 
		(i)-(iv) mentioned in the Lemma~\ref{tutte} hold. In particular, we have $|S_2|\le n/2+\sqrt{N}/2<N/4$. We show that 
		then in the graph $G_3[V\setminus (S_1\cup S_2)]$ there exists a component with a matching saturating at least $n$ vertices.
		Moreover, we shall assume that $G$ contains no edges of the second colour contained in $T_2$; if this is the case 
		we modify a graph and replace $b'$ by $b''= b'+\sqrt{N}<N/8-4\sqrt N$.
		
We define  $A_3=V\setminus (V_1\cup V_2)$,
		$Z_0=T_1\cap (T_2\cup (V_3\setminus V_2))$, $Z_1=T_1\cap U_2$, $Z_2=T_2\cap U_1$,
		$Z'_2=Z_2\cup (U_1\setminus V_2)$, $Z_3=T_2\setminus V_1$ and $Z'_3=Z_3\cup A_3$. 
		Let also $|Z_i|=n_i$, $i=0,1,2,3$, and $|Z'_i|=n'_i$,  $i= 2,3$.
		We shall show that the subgraph induced in $G_3$ by $Z_0\cup Z_1\cup Z_2\cup Z_3$, or a similar one, 
		with $Z_2$ replaced by $Z'_2$ and/or $Z_3$ 
		replaced by $Z'_3$, fulfils the hypothesis of Lemma~\ref{l:m2}, and so contains a component with a matching saturating at least $n$ 
		vertices, which contradicts our assumption.
		
		Let us start with a few observations.
		
		\begin{claim}\label{claim5}
			$|Z_0|=|T_1\cap (T_2\cup (V_3\setminus V_2))|< N/4$.
		\end{claim}  
		
		\begin{proof} Suppose that $|Z_0|\ge N/4$. Note that 
			all edges inside $Z_0$ and between $Z_0$ and $V\setminus (S_1\cup S_2)$
			are of the third colour.  
			Choose a subset   $T\subseteq Z_0$ with $N/4$ vertices. Since
			$|S_1\cup S_2|\le N/2$,  we have
			$$|V\setminus (S_1\cup S_2)|-|T |\ge N/4.$$ 
			So, by Lemma~\ref{l:match1}, the graph contains a matching contained in $F_3$ 
			saturating $N/2$ vertices which contradicts our assumption. 
		\end{proof}
		
		\begin{claim}\label{claim2}
			$|Z_1 |=|T_1\cap U_2|< N/4$.
		\end{claim}  
		
		\begin{proof}
			Suppose that $|Z_1|\ge N/4$. 
			Consider the set 
			$$W_1=T_2\cup (V_1\setminus (V_2\cup S_{1}))\cup A_3.$$ 
			From Lemma~\ref{tutte}(iii) we get   
			$|T_2|\ge |V_2|+|S_2|-N/2$, $|S_1|< N/4$.  Thus, since $|V_2|  +|(V_1\setminus V_2)|+|A_3|=|V|=N$,  we arrive at 
			\begin{align*}
			|W_1|&\ge|T_2|+|(V_1\setminus V_2)|-|S_1|+|A_3|\\
			&> |V_2| +|S_2|-N/2+|(V_1\setminus V_2)| -|S_1|+|A_3|\ge N/4. 
			\end{align*}
			Note that all edges between $Z_1$ and $W_1$ are of the third colour. Thus, 
			from Lemma~\ref{l:match1}, $F_3$ contains  a matching saturating $n$ vertices, contradicting our assumption.	
		\end{proof}
				
		\begin{claim}\label{claim3}
			$|Z_2|=|T_2\cap U_1|\le |Z'_2 |=|Z_2\cup(U_1\setminus V_2)|\le N/4.$\\
		\end{claim}  
		
		\begin{proof}
			Let 
			$$W_2=T_1\cup (V_2\setminus (V_1\cup S_{2}))\cup A_3.$$ 
			As in the previous claim, we   can argue that all edges between 
			$Z_2$ and $W_2$ are of the third colour, and
			\begin{align*}
			|W_2|&=|T_1\cup(V_2\setminus V_1)\setminus S_2|+|A_3|\\
			&\ge|T_1|+|V_2\setminus V_1|-|S_2|+|A_3|\\
			&\ge |V_1| +|S_1|-N/2+|(V_2\setminus V_1)| -|S_2|+|A_3|\ge N/4.
			\end{align*}
			Let $X\subseteq (T_1\cup (V_2\setminus V_1))\setminus S_{2}$ be such that $|X|+|A_3|=N/4$. 
			From Lemma~\ref{l:match1} we infer that $F_3$ contains  a matching $M$  between $X\cup A_3$ and $Z_2$ saturating
			$\min\{N/2, 2|Z_2|\}$ vertices. 
			
			We supplement $M$ by another matching contained in $F_3$. To this end consider a set $W_3=V\setminus (U_1\cup S_1 \cup A_3\cup X)$ and note that 
			all edges between $W_3$ and $U_1\setminus V_2$  are of the third colour.  
			Moreover
			$$|W_3|\ge |V|-|U_1|-|S_1|-|X|-|A_3|> N-N/2-N/4=N/4,$$
			so, by Lemma~\ref{l:match1}(ii),  there is a matching $M'$ in $F_3$ between $U_1\cap (V_1\setminus V_2)$ and $W_3$  
			vertex-disjoint with $M$,   saturating $\min\{N/2, 2|U_1\setminus V_2)|\}$ vertices.
			Thus,  $F_3$ contains a matching $M\cup M'$ saturating $2|Z'_2|$ 	vertices, and so $|Z'_2|< N/4$.
		\end{proof}    
		
		\begin{claim}\label{claim4}
			$|Z_3 |=|T_2\setminus V_1|\le |Z'_3 |=|Z_3\cup A_3|\le N/4$.
		\end{claim}  
		\begin{proof}
			It is enough to note that  $Z_3\subseteq Z'_3\subseteq V\setminus V_1$ and $| V\setminus V_1|\le N/4$.
		\end{proof}

		Before we proceed further we study the size of the set $V_3\setminus (V_2\cup S_1)$. 
		Let us suppose that  $|V_3\setminus (V_2\cup S_1)|\ge N/4$. From Claim~\ref{cl2} it can happen only when $T_1\subseteq V_2$. 
		But then all edges between $V_3\setminus (V_2\cup S_1)$ and $T_1$ are of the third colour, and, since $|T_1|=m_0>N/4$, by Lemma~~\ref{l:match1}, 
		there exists a matching in $F_3$ saturating $N/2$ vertices. 
		Note also that, by Claim~\ref{cl2}, $T_1\cup U_1\subseteq V_2\cup V_3$. Hence, from now on we assume that 
		\begin{equation}\label{eq201}
		|(T_1\cup U_1)\setminus V_2|+|A_3|\le |V_3\setminus (V_2\cup S_1)|<N/4.
		\end{equation} 
		
		Let us also observe that
		\begin{equation}\label{eq:last}
		\begin{aligned}
		n_0&+(n_0+n_1+n_2+n'_3) 
		\\
		&=n_0+|T_1|+|U_1|+|V\setminus V_2|-|U_1\setminus V_2| 
		\\ &\hspace{5truecm} 
		-|S_2|-(|U_2|-|U_2\cap T_1|) 
		\\
		&=n_0+m_0+m_1+m_2 -|U_1\setminus V_2|+|S_2| 
		\\ &\hspace{5truecm} 
		-(2|S_2|+|U_2|)+|U_2\cap T_1|
		\\
		&\ge n_0+|S_2\cap T_1|+|U_2\cap T_1|+m_0+m_1
		\\ &\hspace{5truecm} 
		+m_2-|U_1\setminus V_2| -N/2
		\\
		&\ge m_0+m_0+m_1+m_2 -|U_1\setminus V_2|-N/2
		\\
		&= 2(m_0+m_1/2+m_2)-m_2  -|U_1\setminus V_2|-N/2
		\\
		& \ge 3N/4-|U_1\setminus V_2|.
		\end{aligned}
		\end{equation}
		and, since $n_0=|Z_0|\le N/4$, 
		\begin{equation}\label{eq:last2}
		\begin{aligned}
		n_0+n_1+n_2+n'_3 &\ge 2n_0+n_1+n_2+n'_3-N/4
		\\ & \ge N/2-|U_1\setminus V_2|.
		\end{aligned}
		\end{equation}
		We remark here that from \eqref{eq:last}  and Claims~\ref{claim5}, \ref{claim2}, and~\ref{claim3}, 
		it follows that 
		\begin{equation}\label{eq201b}
		n_0\ge \big(3N/4-(n_1+(n_2+|U_1\setminus V_2|)+n'_3)\big) >0.
		\end{equation}		
		
		Now we split our argument into two cases.
		
		\medskip
		
		{\em Case 1.} $|S_1|\ge N/8$.
		
		\smallskip
		
		Since  $2|S_1|+|U_1|<N/2$, we have $|S_1|+|U_1|<3N/8$ and $|U_1|\le N/4$. All edges between $U_1\setminus V_2$ and $V\setminus (S_1\cup U_1\cup A_3\cup Z_0)$ are of
		the  third colour. Moreover, Claim~\ref{claim5} and \eqref{eq201} give
		\begin{equation}\label{eq202}
		\begin{aligned}
		|V\setminus (S_1\cup U_1\cup A_3\cup Z_0)|&\ge N-|S_1|-|U_1|-|A_3|-|Z_0|\nonumber\\
		& > 5N/8-(N/4-|U_1\setminus V_2|)-|Z_0|\nonumber\\
		&\ge |U_1\setminus V_2|+N/8.
		\end{aligned}
		\end{equation}
		Thus, by Hall's Theorem,  there is a matching $M'$ in the third colour saturating all vertices of $U_1\setminus V_2$. 
		Note that $M'$  is contained in $F_3$, since $U_1\setminus V_2 \subseteq F_3$,
		and saturates no vertices from $Z_0$. We supplement it by another matching $M''$ 
		contained in $F_3$ whose size we estimate applying Lemma~\ref{l:m2} to the sets 
		$Z_0$, $Z_1\setminus (U_1\setminus V_2)$, $Z_2\setminus (U_1\setminus V_2)$ and $Z'_3\setminus (U_1\setminus V_2)$. 
		Note that $M'$ saturates  no vertices from the above four sets.
		Moreover, by (\ref{eq:last}),
		\begin{align*}
		2n_0+n_1+n_2+n'_3 -|U_1\setminus V_2|\ge 3N/4-2|U_1\setminus V_2|,
		\end{align*}
		while (\ref{eq:last2}) gives
		\begin{align*}
		n_0+n_1+n_2+n'_3 -|U_1\setminus V_2| \ge  N/2-2|U_1\setminus V_2|.
		\end{align*}
		Since by \eqref{eq201b} $n_0>0$, from Lemma~\ref{l:m2} it follows that $G_3$ contains a matching 
		$M''$, contained in one component, which  saturates at least 
		$$N/2-2|U_1\setminus V_2|-5\sqrt N >n-2|U_1\setminus V_2|$$ vertices. 
		Moreover 
		$$n_0+n_1+n_2+n'_3\ge N/2-|U_1\setminus V_2|>N/4>2b''+3\sqrt N,$$
		so, by Lemma~\ref{l:m2}, $M''$ is contained in $F_3$.
		Consequently, $M'\cup M''$ is a matching saturating at least $n$ vertices.
				
		\medskip
		
		{\em Case 2.} $|S_1|\le N/8$.
		
		\smallskip

		For such $S_1$ we have 
		$$|V_2\setminus V_1|= |V\setminus V_1|-|A_3|\le N/4-|A_3|,$$
		and, by \eqref{eq201},  
		$$|(V_1\setminus V_2)|\le |S_1|+|(T_1\cup U_1)\setminus V_2|\le |S_1|+ N/4-|A_3|\le 3N/8-|A_3|.$$
		All of the edges between $A_3$ and the set $Y=V\setminus ((V_1\setminus V_2)\cup (V_2\setminus V_1) \cup A_3 \cup Z_0)$ are of the third colour and
		\begin{align*}
		|Y|
		&\ge |V|-(3N/8-|A_3|)-(N/4-|A_3|)-|A_3|-N/4\\
		&=N/8+|A_3|.
		\end{align*} 
		
		Thus, by Hall's Theorem, in $G_3$ one can find a matching $M'$, saturating all vertices of $A_3$. 
		By Lemma~\ref{l:match1}, this matching is contained in a component of third colour, covering all vertices of $A_3$, and 
		so it is contained in $F_3$.
		Just as in the  previous case, we supplement it by another matching $M''$ 
		contained in $F_3$ by applying Lemma~\ref{l:m2} to the sets 
		$Z_0$, $Z_1\setminus M'$, $Z'_2\setminus M'$ and $Z_3\setminus M'$. 
		Indeed, (\ref{eq:last}) gives
		\begin{align*}
		2n_0+n_1+n'_2+n_3-|A_3| \ge 3N/4-2|A_3|,
		\end{align*}
		and by (\ref{eq:last2}) we get
		\begin{align*}
		n_0+n_1+n'_2+n_3-|A_3|  \ge  N/2-2|A_3|.
		\end{align*}
		Since $n_0>0$, from Lemma~\ref{l:m2} it follows that there exists a matching  
		$M''$ in $G_3$ saturating at least 
		$$N/2-2|A_3|-5\sqrt N\ge n-2|A_3| $$ vertices. 
		$M''$ is contained in the subgraph induced in $G_3$ by $Z_0\cup Z_1\cup Z'_2\cup Z_3$ and, since 
		$$n_0+n_1+n'_2+n_3\ge N/2-|A_3|>N/4>2b''+3\sqrt N,$$
		by Lemma~\ref{l:m2}, this subgraph is connected and is contained in $F_3$.
		Consequently, $F_3$ contains a  matching saturating at least $n$ vertices.
		%
		%
		%
	\end{proof}
	
	\medskip
	
	\section{Monochromatic components of medium size}\label{sec:comp}
	
	Because of Lemma~\ref{l:giant}, in order to show Theorem~\ref{thm:main}, it is enough to study the case when each monochromatic component  has fewer than $3N/4$ vertices.  
	Let us start with the following fact from~\cite{LR}.
	
	\begin{lemma}\label{l:comp11}
		Let $G=(V,E)$ be a graph which is $N/8$-dense in $K_N$. Then  each  colouring of
		the edges of $G$ with three colours leads to  a monochromatic component 
		of size at least $N/2$. \qed
	\end{lemma}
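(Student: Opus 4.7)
The plan is to argue by contradiction: assume that every monochromatic component of $G$ has fewer than $N/2$ vertices, and use the $N/8$-density to produce one of size at least $N/2$, a contradiction.

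First I would perform a Gallai-style reduction in two of the three colours. For each $i\in\{1,3\}$, the colour-$i$ components partition $V$ into blocks, each of size strictly less than $N/2$; a greedy packing (placing the blocks in decreasing order of size into the currently smaller bin) therefore produces a bipartition $V=X_i\cup Y_i$ in which $X_i$ and $Y_i$ are each unions of whole colour-$i$ components and $\min(|X_i|,|Y_i|)>N/4>b$. In particular no $G$-edge of colour $i$ crosses the cut $(X_i,Y_i)$. Applying this to $i=1$ and $i=3$ simultaneously refines $V$ into four classes
$$\alpha=X_1\cap X_3,\quad \beta=X_1\cap Y_3,\quad \gamma=Y_1\cap X_3,\quad \delta=Y_1\cap Y_3,$$
and the crucial point is that every $G$-edge between $\alpha$ and $\delta$ or between $\beta$ and $\gamma$ must be of colour~$2$, since such an edge crosses both cuts.

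Next I would extract a large colour-$2$ component. Since $|\alpha|+|\beta|+|\gamma|+|\delta|=N$, one of the sums $|\alpha|+|\delta|$ and $|\beta|+|\gamma|$ is at least $N/2$; say $|\alpha|+|\delta|\ge N/2$. The bipartite subgraph of $G$ between $\alpha$ and $\delta$ is $b$-dense and entirely of colour~$2$. If both $|\alpha|,|\delta|>2b=N/4$, Lemma~\ref{l:match1}(iii) immediately yields a connected colour-$2$ bipartite graph on all of $\alpha\cup\delta$, and hence a colour-$2$ component of size $\ge N/2$ — a contradiction. Otherwise, one of the two parts (say $|\alpha|$) is at most $N/4$, and the lower bounds $|X_1|,|Y_3|>N/4$ then force $|\beta|$ and $|\gamma|$ both to exceed $N/4$; re-running the same bipartite argument on $(\beta,\gamma)$ gives the desired colour-$2$ component.

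The main obstacle will be the boundary regime in which some part sizes lie right at the threshold $N/4=2b$, so that the strict hypothesis of Lemma~\ref{l:match1}(iii) is tight. I would handle it either by sharpening the greedy packing to guarantee a constant-sized margin $\min(|X_i|,|Y_i|)>N/4+c$ — exploiting the strict inequality $b<N/8$ implicit in the statement — or by observing that if both colour-$2$ components on $\alpha\cup\delta$ and $\beta\cup\gamma$ were to remain smaller than $N/2$, then the admissible colour-$1$ edges between $\alpha$ and $\beta$ (the only other allowed colour there) would have to be numerous enough to make $X_1$ into a single large colour-$1$ component, contradicting the initial assumption unless the four part sizes fit a very restrictive configuration that is ruled out by direct inspection.
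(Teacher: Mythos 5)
Your reduction to the two cuts is sound: greedily packing the colour-1 (resp.\ colour-3) components into two bins does give $V=X_i\cup Y_i$ with no colour-$i$ edge across the cut and $\min(|X_i|,|Y_i|)>N/4$, and every edge of $G$ between $\alpha=X_1\cap X_3$ and $\delta=Y_1\cap Y_3$, or between $\beta$ and $\gamma$, is indeed forced to be of the second colour. The gap is in the case analysis that follows. When $|\alpha|+|\delta|\ge N/2$ but, say, $|\alpha|\le N/4$, your claim that the bounds $|X_1|,|Y_3|>N/4$ force $|\beta|,|\gamma|>N/4$ is false; in fact it is impossible, since $|\beta|+|\gamma|=N-(|\alpha|+|\delta|)\le N/2$, so $|\beta|$ and $|\gamma|$ can never both exceed $N/4$. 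Concretely, take $|\alpha|=N/8$, $|\delta|=3N/8+2$, $|\beta|=|\gamma|=N/4-1$: all four cut sides exceed $N/4$, yet Lemma~\ref{l:match1} applied to the pair $(\alpha,\delta)$ only guarantees a second-colour component of size about $3N/8$ (part (iii) is unavailable because $|\alpha|\le 2b$), while the pair $(\beta,\gamma)$ spans fewer than $N/2$ vertices altogether, so neither forced bipartite graph can produce the required component. This is not a boundary phenomenon at the threshold $2b=N/4$ (here $|\alpha|$ is far below it), so neither of the patches you sketch at the end repairs it: a constant-sized margin in the greedy packing does not change the picture, and $X_1=\alpha\cup\beta$ may itself have fewer than $N/2$ vertices, so forcing it to be a single colour-1 component yields no contradiction.

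To close the gap you would need genuinely more structure, for instance cutting also along the colour-2 components (all assumed smaller than $N/2$) and analysing the resulting eight cells against the minimum degree at least $N-N/8$, or exploiting that edges between $\alpha$ and $\beta$ (and between $\alpha$ and $\gamma$, etc.) are restricted to two colours; this is an additional idea, not a routine check. Note also that the paper does not prove this lemma: it is quoted from~\cite{LR}, so there is no in-paper argument to compare with, but as it stands your proposal is not yet a complete proof.
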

	
	Clearly, a 3-coloured graph may have only one  monochromatic component with at least $N/2$ vertices. However, it turns out that if the graph
	has large enough minimum degree and the largest monochromatic component is not too big, then $G$ contains  monochromatic component of at least $N/2$ vertices in two different colours. Let us recall that $G_1=(V,E_1)$, $G_2=(V,E_2)$, and $G_3=(V,E_3)$, denote the subgraphs of 
	$G$ induced by edges of the first, the second, and the third colour, respectively.
	
	\begin{lemma}\label{l:comp}
		Let $G=(V,E)$ be a graph which is $N/8$-dense in $K_N$.  
		whose edges are coloured with three colours.
		If the largest monochromatic component 
		in  $G$ has fewer than $3N/4$ vertices, then 
		there exists another monochromatic component
		of at least $N/2$ vertices.
	\end{lemma}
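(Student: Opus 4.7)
I proceed by contradiction. Suppose the only monochromatic component of $G$ on at least $N/2$ vertices is a colour-$1$ component $F_1=(V_1,E_1)$, with $N/2\le |V_1|<3N/4$ (the lower bound coming from Lemma~\ref{l:comp11}), while every colour-$2$ and colour-$3$ component has fewer than $N/2$ vertices. Set $W=V\setminus V_1$, so $|W|>N/4$. Since no colour-$1$ edge can leave $F_1$, every edge between $V_1$ and $W$ is of colour $2$ or $3$, and by $N/8$-density each $w\in W$ has at least $|V_1|-N/8\ge 3N/8$ such neighbours in $V_1$.

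The guiding intuition is the bipartite analogue of Gyárfás' theorem: in any $2$-edge-colouring of $K_{m,n}$ some monochromatic component spans at least $\lceil(m+n)/2\rceil$ vertices. The bipartite graph $B=G[V_1,W]$ is $2$-coloured with $|V_1|+|W|=N$ and is $N/8$-dense, so up to a density correction that I will control by Lemma~\ref{l:match1}, one of the two colours should still contain a component of size essentially $N/2$, contradicting the standing assumption.

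Concretely, for $w\in W$ let $C_i(w)$ be the colour-$i$ component of $w$ in $G$ and set $A_i(w)=V_1\cap C_i(w)$. Pick $w_0\in W$ maximising $\max(|A_2(w_0)|,|A_3(w_0)|)$; after swapping colours we may assume this maximum is $|A_2(w_0)|=:|A|$, and an averaging on $w_0$ gives $|A|\ge 3N/16$. If $|C_2(w_0)|\ge N/2$ we are done, so assume $|C_2(w_0)|<N/2$. Then no vertex of $W\setminus C_2(w_0)$ has a colour-$2$ edge into $A$, and by density each such vertex has at least $|A|-N/8$ colour-$3$ edges to $A$. Applying Lemma~\ref{l:match1}(ii) to the colour-$3$ bipartite graph between $W\setminus C_2(w_0)$ and $A$, provided $|A|>N/4$, produces a colour-$3$ component containing all of $W\setminus C_2(w_0)$ together with at least $|A|-N/8$ vertices of $A$. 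Since $|C_2(w_0)|<N/2$ forces $|W\cap C_2(w_0)|<N/2-|A|$, this colour-$3$ component has at least $|W|+2|A|-5N/8$ vertices, which is $\ge N/2$ as soon as $|A|\ge (9N/8-|W|)/2$.

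The main obstacle will be the boundary regime in which the simple averaging bound $|A|\ge 3N/16$ is insufficient to reach $(9N/8-|W|)/2$, most acutely when $|W|$ is only slightly larger than $N/4$. To close this gap I would first note that a single colour-$2$ component cannot contain all of $V_1$ (as that would already give $|C_2|\ge N/2$), so $V_1$ is split across at least two colour-$2$ components and $|A|$ can be pushed beyond the simple average. I would then combine this with the symmetric argument for colour~$3$, using the identity $|A_2(w)|+|A_3(w)|\ge |V_1|-N/8$ valid for every $w\in W$, and perform case analysis on $|V_1|$ versus $|W|$ to squeeze out the extra room. In the residual case where $V_1$ is fragmented in both colours, a pigeonhole on shared $V_1$-neighbours of two carefully chosen vertices of $W$, combined with Lemma~\ref{l:match1}(i), should yield a monochromatic component of one of the non-dominant colours on at least $N/2$ vertices, contradicting the hypothesis.
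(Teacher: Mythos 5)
There is a genuine gap, and it sits exactly where the real work of the lemma lies. Your concrete argument produces a colour-3 component of size at least $|W|+2|A|-5N/8$, so you need $|A|\ge (9N/8-|W|)/2$; but the only bound you actually establish is the averaging bound $|A|\ge (|V_1|-N/8)/2$. Writing $|W|=N-|V_1|$, the requirement becomes $|A|\ge (N/8+|V_1|)/2$, so the averaging bound falls short by exactly $N/8$ for \emph{every} admissible value of $|V_1|$ (e.g.\ $3N/16$ versus $5N/16$ when $|V_1|=N/2$, and $5N/16$ versus $7N/16$ when $|V_1|$ is near $3N/4$). This is not a ``boundary regime'': the quantitative core of the lemma is precisely how to gain that extra $N/8$, and the proposal never supplies it. Moreover, your application of Lemma~\ref{l:match1}(ii) needs one side of the bipartite colour-3 graph between $A$ and $W\setminus C_2(w_0)$ to exceed $N/4$, which is likewise unverified ($|A|\ge 3N/16$ does not suffice, and $|W\setminus C_2(w_0)|$ can a priori be small). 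The closing paragraph is a plan rather than a proof: the observation that $V_1$ meets at least two colour-2 components does not by itself raise $|A_2(w_0)|$ for your fixed extremal $w_0$ (vertices of $V_1$ may have all their cross-edges in colour~3, or lie in colour-2 components not reachable from $W$), and the ``pigeonhole on shared neighbours \ldots should yield'' step is exactly the unproved assertion.

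For comparison, the paper closes this gap by a different decomposition: it takes $F_2$ to be the \emph{largest} component in the two remaining colours, assumes $|V_2|<N/2$, and partitions $V$ into $A=V_1\setminus V_2$, $B=V_2\setminus V_1$, $C=V\setminus(V_1\cup V_2)$, $D=V_1\cap V_2$. The key point is that whole bipartite pairs then carry forced colours (all edges between $A$ and $B$, and between $C$ and $D$, are of the third colour; later, edges between certain subsets $A'$ and $C$ or $C'$ are forced to be of the second colour), so Lemma~\ref{l:match1}(i)--(iii) can be applied to $N/8$-dense bipartite graphs whose \emph{both} sides are shown to exceed $N/8$ or $N/4$ via a case analysis on $|B|$ and $|A\cup B|$, each branch contradicting the maximality of $F_2$ or the bound $|V_1|<3N/4$. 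If you want to salvage your approach, you would need an analogous mechanism that converts ``all colour-2 and colour-3 components are small'' into forced-colour bipartite graphs with both sides quantitatively large, rather than relying on the averaging bound for a single vertex $w_0$.
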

	
	\begin{proof}
		Let $F_1=(V_1,E_1)$ be the largest monochromatic component of~$G$, 
		say, in the first colour, and let $|V_1|<3N/4$. Note  that by Lemma~\ref{l:comp11} we know that $|V_1|\ge N/2$.
		Let $F_2=(V_2, E_2)$ denote the largest monochromatic component in two  other colours.
		We assume that  $F_2$ is of the second colour and that $|V_2|<N/2$.
		Let $A=V_1\setminus V_2$, $B=V_2\setminus V_1$ and  $C=V\setminus (V_1\cup V_2)$ and $D=V_1\cap V_2$. 
		We consider two cases.
		
		\medskip
		
		{\em Case 1.} $|B|\ge N/8$.
		
		\smallskip 
		
		If  $|A\cup B|\ge N/2$, then, since  $|A|\ge |B|$, we have 
		$|A|\ge N/4$. Thus,  by Lemma~\ref{l:match1}(iii), $G_3[A\cup B]$ is connected graph with at least $N/2$ vertices, contradicting the choice of $F_2$. 
		
		If $|A\cup B|< N/2$ then $|C\cup D|\ge N/2$. We will argue that then both $C$ and $D$ have more than $N/8$ vertices each
		and so,  by Lemma~\ref{l:match1}(iii), $G_3[C\cup D]$ is 
		connected and contains at least $N/2$  vertices. 
		
		Note that  $|A\cup B|< N/2$ and $|B|\ge N/8$, so $|A|=|V_1\setminus V_2|<3N/8$.  
		Thus, since $|V_1|\ge N/2$, we have $|D|=|V_1\cap V_2|> N/8$. Moreover, 
		$|V_2|<N/2$ and so
		$$|C|\ge |C\cup A|-|A|=|V\setminus V_2|-|A|>N/2-3N/8\ge N/8. $$   
		
		\medskip
		
		{\em Case 2.} $|B|< N/8$.
		
		\smallskip 
		
		Note that for such small $B$ we have
		$$|C|= |V|-|V_1|-|B|>N/8>|B|.$$
		
		The remaining part of the argument we split  into  two subcases.	
		
		\medskip
		
		{\em Subcase 2a.} $|A\cup B|\ge N/2$.
		
		\smallskip 
		
		Then $|A|>3N/8$ and, by Lemma~\ref{l:match1}(ii), there exists a connected subgraph in $G_3$  which covers all $A\cup B$ 
		except  at most $N/8$ vertices of $A$. 
		Let $A'\subseteq A$ denote the set of vertices of $A$ which belong to this subgraph. 
		Note also that if  $|D|\ge N/4$, then, by Lemma~\ref{l:match1}(iii), $G_3[C\cup D]$ is connected  and has 
		$$|C|+|D|>|B|+|D|=|V_2|$$
		vertices contradicting our choice of $F_2$. Thus, 
		$|D|<N/4$ and 
		\begin{equation*}\label{11}
		|V_2|=|B|+|D|<3N/8.
		\end{equation*}
		Next we observe that each vertex of $C$ has only neighbours of the second colour in $A'$. Indeed, by Lemma~\ref{l:match1}(i), 
		a vertex $c\in C$ in $G_3[C\cup D]$, belongs to a component of $G_3$  which covers all but at most $N/4$ vertices of $C\cup D$.
		An edge of the third colour joining  $c$ to $A'$ would result in a component  in $G_3$ which covers all  but at most $N/8$ vertices of $A\cup B$ and all but $N/4$ vertices of  $C\cup D$. Such a component would have at least  $5N/8>3N/8>|V_2|$ vertices contradicting the choice of $F_2$. 
		Consequently, all edges between $A'$ and $C$ are of the second colour and, since $|C|>N/8$ and $|A'|\ge N/4$, by Lemma~\ref{l:match1}(iii),  $G_2[A'\cup C]$ is connected with at least $3N/8>|V_2|$ vertices, which, again is impossible. This completes the proof of this subcase.
		
		\medskip
		
		{\em Subcase 2b.} $|A\cup B|< N/2$.
		
		\smallskip 
		
		Since $|A\cup B|<N/2$, we have $|C\cup D|>N/2$. Consequently, if 
		$|D|\ge N/8$, then, by Lemma~\ref{l:match1}(iii),   $G_3[C\cup D]$ is connected and we are done. 
		
		Thus, let us assume that 
		$|D|<N/8$ and so   $|V_2|=|B|+|D|<N/4$, which, in turn, implies that  $|A\cup C|\ge 3N/4$ and $|C|>3N/8$. Note also that, since $|V_1|=|A|+|D|\ge N/2$, we have
		$|A|>3N/8$.  Hence, by  Lemma~\ref{l:match1}(ii), there exists a connected subgraph $F'_3=(V'_3,E'_3)$ of $G_3$ 
		which covers all $A\cup B$ except  at most $N/8$ vertices of $A$, 
		and there exists a connected subgraph $F''_3=(V''_3,E''_3)$ of $G_3$ which covers all $C\cup D$ except  at most $N/8$ vertices of $C$. 
		Let $A'=V'_3\cap A\subseteq A$ and  $C'=C\cap V''_3$.  
		If a vertex from $A'$ has a neighbour of the third colour in $C'$, then $G_3$  would contain a component which covers all vertices except  at most $N/8$ vertices of $A$ and  at most $N/8$ vertices of $C$. Such a  component would contain at least $3N/4$ vertices contradicting the choice of $F_2$.
		Thus, all edges between  $A'$ and $C'$ are of the second colour. Since $|A'|,|C'|>N/4$, by  Lemma~\ref{l:match1}(iii), $G_2[A'\cup C']$ 
		is connected and has more than $|V_2|$ vertices  contradicting the choice of $F_2$.  
	\end{proof}
	
	The following observation is a direct   
	consequence of Lemma~\ref{l:match2}. 
		
	\begin{lemma}\label{l:comp3}
		Let $G=(V,E)$ be a graph which is $N/8$-dense in $K_N$  
		whose vertices are coloured with three colours in such a way that it contains no monochromatic 
		component with a matching saturating at least $N/2$ vertices. Moreover, let us also assume that 
		all monochromatic components of $G$ are smaller than $3N/4$. Then, for each pair of monochromatic components 
		$F_1=(V_1,E_1)$,  $F_2=(V_2,E_2)$ of the first and the second colour respectively, such that $|V_1|, |V_2|\ge N/2$, 
		there exists a monochromatic component $F_3=(V_3,E_3)$ of the third colour such that $V=V_1\cup V_2\cup V_3$. 
	\end{lemma}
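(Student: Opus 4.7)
The plan is to combine the ``in particular'' clause of Lemma~\ref{l:match2} with a short common-neighbour argument inside $V_1\cap V_2$. The first step applies Lemma~\ref{l:match2} to the pair $(F_1,F_2)$: since by hypothesis no monochromatic component carries a matching saturating $N/2$ vertices and $|V_1|,|V_2|\ge N/2$, we obtain $|V_1\cap V_2|>N/4$.

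Next I would show that the leftover set $W:=V\setminus(V_1\cup V_2)$ lies inside a single monochromatic component of the third colour. The key observation is that for any $v\in W$, membership $v\notin V_1$ forces every edge of $G$ from $v$ to~$V_1$ to be of colour different from the first (otherwise $v$ would join the first-colour component $V_1$); similarly every edge from $v$ to $V_2$ has colour different from the second. Consequently, every edge of $G$ from $v$ to $V_1\cap V_2$ is of the third colour. Using $N/8$-density, $v$ misses at most $N/8$ vertices of $V_1\cap V_2$, so it has at least $|V_1\cap V_2|-N/8>N/8$ third-colour neighbours in $V_1\cap V_2$.

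For any two vertices $v,w\in W$, each of their third-colour neighbourhoods inside $V_1\cap V_2$ has size greater than $|V_1\cap V_2|-N/8$, so inclusion--exclusion within $V_1\cap V_2$ yields at least $|V_1\cap V_2|-N/4>0$ common third-colour neighbours. Hence every pair in $W$ is joined by a third-colour path of length~$2$, and $W$ lies inside a single monochromatic component $F_3=(V_3,E_3)$ of the third colour, giving $V=V_1\cup V_2\cup V_3$. If $W$ happens to be empty, any third-colour component (even a singleton) trivially satisfies the conclusion.

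There is no substantive obstacle to carrying this out; the entire argument reduces to the bound $|V_1\cap V_2|>N/4$ delivered by Lemma~\ref{l:match2}, followed by a one-line density calculation. The only point that deserves a moment of care is the observation that $v\notin V_1$ rules out precisely \emph{first}-colour edges from $v$ to $V_1$ (rather than edges of arbitrary colour), which follows at once from the definition of a monochromatic component; everything else is routine.
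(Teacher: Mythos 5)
Your proposal is correct and follows essentially the same route as the paper: apply Lemma~\ref{l:match2} to get $|V_1\cap V_2|>N/4$, then note that any two vertices outside $V_1\cup V_2$ have a common third-colour neighbour in $V_1\cap V_2$ (all their edges into $V_1\cap V_2$ being of the third colour), so they all lie in one third-colour component. Your write-up merely makes explicit the density computation and the colour-exclusion observation that the paper leaves implicit.
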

	
	\begin{proof}
		Since $G$ contains 
		no matchings saturating $N/2$ vertices contained in one monochromatic component, 
		from Lemma~\ref{l:match2} we get
		$|V_1\cap V_2|\ge N/4$. But then each pair of vertices from $V\setminus (V_1\cup V_2)$ has a common neighbour of the third colour in 
		$V_1\cap V_2$. Consequently, all vertices of $V\setminus (V_1\cup V_2)$ belong to one monochromatic component $F_3=(V_3,E_3)$ of the third colour.
	\end{proof}

	\section{Proof of Theorem~\ref{thm:main}}\label{proof}
	
	Since Lemma~\ref{l:giant} takes care of the case when $G$ contains a large monochromatic component of at  least $3N/4$ vertices, 
	in order to prove Theorem~\ref{thm:main}  it is enough to show  the following result.

	\begin{lemma}\label{l:small}
		Let $G=(V,E)$ be a graph which is $N/8$-dense in $K_N$  
		whose vertices are coloured with three colours in such a way that  no monochromatic 
		component is larger than $3N/4$. 
		Then $G$ contains a monochromatic component with a matching saturating at least $N/2-4\sqrt N$ vertices. 
	\end{lemma}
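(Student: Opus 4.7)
The approach is a proof by contradiction: assume no monochromatic component of $G$ contains a matching saturating $N/2 - 4\sqrt N$ vertices. Combining Lemma~\ref{l:comp11}, Lemma~\ref{l:comp}, and Lemma~\ref{l:comp3} yields monochromatic components $F_1, F_2, F_3$ in the three colours with $|V_1|, |V_2| \ge N/2$ and $V = V_1 \cup V_2 \cup V_3$, while Lemma~\ref{l:match2} gives $|V_1 \cap V_2| > N/4$.

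The key structure is the Venn decomposition $V = \alpha_1 \cup \alpha_2 \cup \alpha_3 \cup \beta_{12} \cup \beta_{13} \cup \beta_{23} \cup \gamma$, where $\alpha_i = V_i \setminus (V_j \cup V_k)$, $\beta_{ij} = V_i \cap V_j \setminus V_k$, and $\gamma = V_1 \cap V_2 \cap V_3$ (with $\{i,j,k\} = \{1,2,3\}$). Enumerating admissible edge colours gives two restrictions: every edge between $\beta_{ij}$ and $\beta_{ik}$ is of colour $i$, and every edge between $\alpha_i$ and $\alpha_j$ is of colour $k$ lying in a colour-$k$ component other than $V_k$. Moreover, a vertex $v \in \alpha_i$ admits no edge at all to $\beta_{jk}$ (no colour works), so the $N/8$-density of $G$ forces $|\beta_{jk}| < N/8$ whenever $\alpha_i \neq \emptyset$.

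I then split on how many of the $\alpha_i$'s are non-empty. If all three are empty, then $|V_i| \le 3N/4$ gives $|\beta_{ij}| \ge N/4$ for every pair; the $N/8$-dense all-colour-$i$ bipartite subgraph between $\beta_{ij}$ and $\beta_{ik}$, via Hall's theorem (or Lemma~\ref{l:match1}(iii)), produces a matching of size $\ge N/4$ inside $F_i$ saturating at least $N/2$ vertices. If exactly one $\alpha_i$ is non-empty, the identity $|V_i| = N - |\beta_{jk}| > 7N/8$ alone violates $|V_i| \le 3N/4$. If two are non-empty, say $\alpha_i$ and $\alpha_j$, then $|V_i|, |V_j| \le 3N/4$ yields $|\alpha_i|, |\alpha_j| > N/8$, and when $\min(|\alpha_i|, |\alpha_j|) \ge N/4 - 2\sqrt N$ the $N/8$-dense all-colour-$k$ bipartite subgraph between them already provides the required matching inside a single colour-$k$ component.

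The main obstacle is the sub-case where two $\alpha_i$'s are non-empty yet one of them has size strictly between $N/8$ and $N/4 - 2\sqrt N$. The bipartite matching between $\alpha_i$ and $\alpha_j$ on its own is then too short, and the plan is to augment it by colour-$k$ edges incident to $\beta_{ij}$: apply Lemma~\ref{tutte} to $F_k$ to obtain a partition $V_k = S_k \cup T_k \cup U_k$, and then feed the four-part structure $(T_k; \alpha_i, \alpha_j, \beta_{ij})$ (or an analogous decomposition of the colour-$k$ component housing the $\alpha_i$--$\alpha_j$ edges) into Lemma~\ref{l:m2} to produce the missing matching edges. The $4\sqrt N$ slack in the conclusion is precisely what is needed to absorb the $O(\sqrt N)$ losses coming from Tutte's partition and from Lemma~\ref{l:m2}.
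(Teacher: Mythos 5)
Your reduction to the components $F_1,F_2,F_3$ with $V=V_1\cup V_2\cup V_3$ and the colour restrictions on the Venn pieces are correct, but your case analysis stops one case short: you never treat the situation in which all three sets $\alpha_1,\alpha_2,\alpha_3$ are non-empty, and under the contradiction hypothesis this is the only situation that can actually occur. Indeed, if some $\alpha_k=\emptyset$ then $V_k\subseteq V_i\cup V_j$, so $V=V_i\cup V_j$; since no monochromatic component contains a matching saturating $N/2$ vertices, Lemma~\ref{l:match2} forces $\min\{|V_i\setminus V_j|,|V_j\setminus V_i|\}<N/4$, whence one of the two components has more than $3N/4$ vertices, contradicting the hypothesis of the lemma (this is exactly Claim~\ref{cl:zero}(iv) in the paper). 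So all three of your cases (zero, one, or two of the $\alpha_i$ non-empty) are vacuous or dispatched in one line, while the genuinely hard case --- all $\alpha_i\neq\emptyset$, with a further split according to which of the sets $\beta_{ij}$ are empty --- is precisely where the paper's proof does its work: Tutte partitions of all three components, the claims $T_i\cap A_i=\emptyset$ and $U_i\cap A_i=\emptyset$, the sets $T_i'=T_i\setminus(S_1\cup S_2\cup S_3)$ whose pairwise intersections violate $b'$-density, and the final Hall/Dirac construction of a large matching in the third colour. None of this is present in your proposal, so the main content of the lemma is missing.

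The mechanism you propose for your ``main obstacle'' also fails as stated. Any colour-$k$ edge with an endpoint in $V_k$ lies in the component $F_k$, so edges between $T_k\subseteq V_k$ and $\alpha_i\cup\alpha_j\cup\beta_{ij}\subseteq V\setminus V_k$ are \emph{never} of colour $k$; consequently the four-part structure $(T_k;\alpha_i,\alpha_j,\beta_{ij})$ is not a subgraph of a single colour class, and Lemma~\ref{l:m2} applied to it cannot yield a monochromatic matching. Moreover, the colour-$k$ edges spanned by $\alpha_i\cup\alpha_j\cup\beta_{ij}$ lie in colour-$k$ components different from $F_k$ and need not all lie in one such component; already in your ``easy'' subcase, when both $\alpha_i$ and $\alpha_j$ have size just below $N/4$, the hypothesis of Lemma~\ref{l:match1}(iii) (larger side exceeding $N/4$) fails, so neither connectivity nor a matching saturating $N/2-4\sqrt N$ vertices inside one component is guaranteed. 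These defects are moot only because, as noted above, the whole two-non-empty case evaporates via Lemma~\ref{l:match2}; what remains to be proved is the all-non-empty case, for which your proposal offers no argument.
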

	
	\begin{proof}
		We shall show Lemma~\ref{l:small} by contradiction. Thus, we assume that 
		the edges of a graph $G=(V,E)$, $|V|=N=(2+\eta)n$, which is 
		$b$-dense in $K_N$ with $b<(1/8-\eta)N<N/8-10\sqrt N$, 
		are 3-coloured in such a way 
		that no  monochromatic component contains a matching saturating at least $N/2-4\sqrt N$ vertices. 
		The subgraphs of $G$ induced by the first, the second, and the third colour we denote by $G_1$, $G_2$, and $G_3$ respectively.
		Let $F_1=(V_1,E_1)$ be one of the largest monochromatic component (we assume that it is of the first colour) and 
		$F_2=(V_2,E_2)$ be the one of the second largest monochromatic component (we assume that it is of the second colour). From Lemma~\ref{l:comp} we have 
		$|V_1|\ge |V_2|\ge N/2$. Moreover, by $F_3=(V_3,E_3)$ we denote a monochromatic component of the third colour such that $V=V_1\cup V_2\cup V_3$
		whose existence is assured by Lemma~\ref{l:comp3}. 
		Note that since none of the components $F_i$, $i=1,2,3$, contains a matching saturating $N/2$ vertices,  each vertex set $V_i$, $i=1,2,3$, 
		can be partitioned into three sets, $S_i$, $T_i$, and $U_i$, which fulfil properties (i)-(iv) of Lemma~\ref{tutte}. We will show that it leads to a contradiction. 
			
		In order to simplify the description of the argument we shall always assume that none of the sets $T_i$ contains edges of the $i$th colour. 
		If this is the case we can always remove them; thus, to make our assumption valid we shall prove that the assertion holds not only for all $b$-dense graphs  $G$, 
		but for all  $b'$-dense graphs $G$, where $b'=b+3\sqrt N<N/8-7\sqrt N$. Moreover, 
from now on we set $A_{123}=V_1\cap V_2 \cap V_3$, $A_{ij}=(V_i\cap V_j)\setminus A_{123}$ for $1\le i<j\le 3$, and $$A_i=V_i\setminus (V_j\cup V_k)=V_i\setminus (A_{ij}\cup A_{ik}\cup A_{ijk})$$ 
		where here and  below  the indices $i,j,k$ are always 
		assumed to be  different elements of the set $\{1,2,3\}$.  Note that for such $i$, $j$, $k$, we have  
		$$V_i\setminus V_j = A_i\cup A_{ik}. $$

		We start with a list of properties of the above defined sets which follow
		directly from the definition and our assumptions on $G$ and components $F_1$, $F_2$ and $F_3$.
		
		\begin{claim} \label{cl:zero}
			\begin{enumerate}
				\item[(i)] All edges between $A_i$ and $A_{123}$ are of the $i$th colour.
				\item [(ii)] All edges between $A_{ij}$ and $A_{ik}$ are of the $i$th colour.
				\item[(iii)] $|V|-|V_i|= |A_j|+|A_k|+|A_{jk}|> N/4$.
				\item[(iv)] $A_i\neq \emptyset$.
				\item[(v)] There are no edges between $A_i$ and $A_{jk}$. In particular, 
				$|A_{jk}|\le b'<N/8-7\sqrt N$.
				\item[(vi)] If $A_{ik}=\emptyset$ then at least 
				one of the sets $A_{ij}$ and $A_{jk}$ is empty as well.
				\item[(vii)] At most one of the sets $A_1$, $A_2$, $A_3$ is larger than $N/4$. 
			\end{enumerate}	
		\end{claim}	
		
		\begin{proof}
			The items (i)-(iii) follows from the definition of the sets  $A$'s and the assumption that all $|V_i|\le 3N/4$. To see (iv) observe that the fact that $A_i=\emptyset$ means that the vertex set $V$ are covered by two components and so, by Lemma~\ref{l:match2}, one of them must have at least $3N/4$ vertices contradicting our assumption. The first part of 
			(v) follows the definition of   $A_i$ and $A_{jk}$, while the second one is a consequence of (iv) and the fact that $G$ is $b'$-dense. 
			If $A_{ik}$ is empty then, by (iii), one of the sets $A_i$ and $A_k$, say, $A_i$, is larger than $N/8>b'$. Hence, by (v), $A_{jk}=\emptyset$. Finally, 
			(vi) follows from Lemma~\ref{l:match1}.
		\end{proof}
		
		The remaining part of the proof of Lemma~\ref{l:small} we split  into two cases. The idea of the argument in both of them are similar. 
		We define sets $T'_i=T_i\setminus (S_1\cup S_2\cup S_3)$, for $i=1,2,3$, and   
		show that are localized in a small part of $V$ (e.g $A_i\cap T'_1=\emptyset$), 
		and they are large enough to have a substantial  intersection, which, in turn, leads to a contradiction with the assumption that $G$ is $b'$-dense. However, although both  proofs follow a similar line, the details are somewhat different. 
		
		\medskip
		
		{\sl Case 1.} Each of the sets  $A_{12}$, $A_{13}$, $A_{23}$ is non-empty.    
		
		\smallskip		
		
		Note that from Claim~\ref{cl:zero}(v) it follows that each of the six sets 
		$A_1$, $A_2$, $A_3$, $A_{12}$, $A_{13}$, $A_{23}$, has at most $b'<N/8$ vertices. 
		From Claim~\ref{cl:zero}(iii),
		we infer that $|A_{123}|\ge N-6b'>N/4$ and $|V_i|\ge 5N/8$ for $i=1,2,3$.
		
		\begin{claim}\label{cla4}
			$T_i\cap A_i=\emptyset$.
		\end{claim}

		\begin{proof}
			Let us assume that $v\in A_1\cap T_1$. Since vertices of $((T_1\cup U_1)\cap A_{123})\cup A_{23}$ are non-neighbours of $v$, we have
			\begin{equation*}
			|(T_1\cup U_1)\cap A_{123}|+|A_{23}|<N/8,
			\end{equation*}
			and 
			\begin{equation*}
			|S_1|\ge |S_1\cap A_{123}|> |A_{123}|-(N/8-|A_{23}|).
			\end{equation*}
			Hence
			\begin{equation}\label{eqq1}
			\begin{aligned}
			|T_1|&\ge |V_1|+|S_1|-N/2\\
			& > N-|A_2|-|A_3|-|A_{23}|+|A_{123}|-N/8+|A_{23}|-N/2\\
			&=3N/8+|A_{123}|-|A_2|-|A_3|. 
			\end{aligned}
			\end{equation}
			Consequently, since $|T_1\cap A_{123}|\le N/8-|A_{23}|$,
			\begin{equation*}\label{eqq2}
			|T_1\setminus A_{123}|\ge 3N/8+|A_{123}|-|A_2|-|A_3|+|A_{23}|-N/8>N/4. 
			\end{equation*}
			Thus, $T_1$ must have a non-empty intersection with all three sets, $A_1$, $A_{12}$, 
			and $A_{13}$.
			Since vertices of  $ (T_1\cup U_1)\cap A_{13}$ and vertices of $  A_3$ are non-neighbours of a vertex  $w \in T_1\cap A_{12}$, we have
			$$|(T_1\cup U_1)\cap A_{13}|+|A_3|<N/8.$$
			Thus
			\begin{equation}\label{eqq3}
			|S_1\cap A_{13}|\ge |A_{13}|-(N/8-|A_3|), 
			\end{equation}
			and similarly
			\begin{equation}\label{eqq4}
			|S_1\cap A_{12}|\ge |A_{12}|-(N/8-|A_2|). 
			\end{equation}
			But in this way we get a better lower bound for $S_1$ in  (\ref{eqq1}) and can replace it by  
			\begin{align*}\label{eqq5}
			|T_1\setminus A_{123}|&\ge N/8+ |A_{12}|+|A_{13}|> |V_1\setminus A_{123}|. 
			\end{align*}
			This contradiction shows that $T_1\cap A_1=\emptyset$. In the same way one can argue that $T_2\cap A_2=T_3\cap A_3=\emptyset$.
		\end{proof}   
		\begin{claim}\label{cla5}
			$U_i\cap A_i=\emptyset$.
		\end{claim}   
		
		\begin{proof}
			Let us assume again that, say, $v\in A_1\cap U_1$. Since vertices of $(T_1\cap A_{123})\cup A_{23}$  are  non-neighbours of $v$, we have 
			\begin{equation*}\label{eqq6}
			|T_1\cap A_{123}|< N/8-|A_{23}|,
			\end{equation*}
			and using the fact that   $|A_2|+|A_3|<N/4$, we obtain 
			\begin{equation}\label{eqq7}
		\begin{aligned}
		|T_1\setminus  A_{123}|&>|V_1|+|S_1|-N/2- N/8+|A_{23}|\\
		&\ge |V_1|-5N/8+|A_{23}| \ge 3N/8 -|A_2|-|A_3|\\
		& >N/8.
		\end{aligned}
			\end{equation}			
			Hence,  by  Claim~\ref{cla4}, $T_1$ must intersect both sets $A_{12}$ and $A_{13}$.
			Consequently (\ref{eqq3}) and (\ref{eqq4}) hold.
			Thus, using  (\ref{eqq3}) and  (\ref{eqq4}), we can improve  (\ref{eqq7}) to 
			\begin{align*}
			|T_1\setminus A_{123}|>|A_{12}|+|A_{13}|=|V_1\setminus (A_{123}\cup A_1)|. 
			\end{align*}
			This contradiction completes the proof. 
		\end{proof} 		
		
		
		
				Now, for $i=1,2,3$, we set $T'_i=T_i\setminus (S_1\cup S_2\cup S_3)$.  
		Note that from Lemma~\ref{tutte}(iii), for $i=1,2,3$, we get 
		\begin{align*}
		|T_i|-|S_i\setminus A_i|&\ge |V_i|+|A_i|-n\\
		&=N-n+|A_i|-|A_j|-|A_k|-|A_{jk}|.
		\end{align*}
		Since $T_i\cap A_i=\emptyset$, if we sum up the above inequalities for $i=1,2,3$, we arrive at
		\begin{equation}\label{eqq12}
		\begin{aligned}
	|T'_1|+|T'_2|+|T'_3|&\ge 2N-3n+|A_{123}|>3N/4-n/2+|A_{123}|\\
	&\ge N-n/2> 3N/4. 
		\end{aligned}
		\end{equation} 
		
		Note that $|V_i|< 3N/4$ and so, by Lemma~\ref{tutte}(iv)
		$$|T'_i|\le |T_i|\le 3N/4-n/2,\quad \textrm{for\ }i=1,2,3.$$ 
		Thus, at least two of the sets $T'_1$, $T'_2$, $T'_3$
		are larger than $N/8$. Let us assume that these are $T'_1$ and~$T'_2$. 
		
		Note that 
		$(T'_1\cup T'_2)\cap T_3
		=\emptyset$.
		Indeed, a vertex $v\in T'_1\cap T_3$ cannot be adjacent
		to any of at least $N/8-1$  elements   $w\in T'_2\setminus \{v\}$. Thus,   
		$$T'_1\cup T'_2 \subseteq (A_{123}\cup A_{12}\cup A_{13}\cup A_{23})\setminus T_3,$$
		and 
		\begin{align}
		|T'_1\cap T'_2|&=|T'_1|+|T'_2|-|T'_1\cup T'_2|\label{eqq111a}\\
		&\ge |T'_1|+|T'_2|+|T_3|\nonumber\\
		&\quad \quad-(|A_{123}|+|A_{12}|+|A_{13}|+|A_{23}|)\nonumber \\
		&\ge N/2-|A_{12}|-|A_{13}|-|A_{23}|-|T'_3|+|T_3|\label{eqq111b}\\
		& \ge N/8>b'.\nonumber
		\end{align}
		If $T'_3\neq \emptyset$, then take $v\in T'_3$ and note that, as before, none of the pairs $\{v,w\}$, where 
		$w\in (T'_1\cap T'_2)\setminus \{v\}$, is in $G$, contradicting the fact that $G$ is $b'$-dense 
		in~$K_{N}$. 
		
		Finally, let us consider the case when $T'_3=\emptyset$. 
		
		\begin{claim}\label{cla8}
			If $T'_3=\emptyset$, then  $(T'_1\cup T'_2)\cap A_{12}=\emptyset$. 
			
			In particular, 
			$T'_1\cup T'_2\subseteq A_{123}\subseteq V_3$.
		\end{claim}
		
		\begin{proof}
			Since $|V_3|>5N/8$, we have $|T_3|>N/8$. 
			Thus, from (\ref{eqq111b}), we get $|T'_1\cap T'_2|\ge N/4$. 
Let us assume that 
			$v\in  (T'_1\cup T'_2)\cap A_{12}$. Since $T'_1\cap T'_2\subseteq A_{123}\cup A_{12}$   and 
			$|A_{12}|<N/8$, from (\ref{eqq111b}) we deduce that at least    
			$$|T'_1\cap T'_2\cap A_{123}|\ge N/4-|A_{12}|>N/8$$
			vertices are not adjacent to $v$ which contradicts the fact that $G$ is $b'$-dense.
		\end{proof}
		
		From Claim~\ref{cla8} we conclude that  $T'_1\cup T'_2\subseteq U_3$ so, by Lemma~\ref{tutte}(iii),
		$$|T'_1\cup T'_2|\le |U_3|<N/2.$$
		Thus, (\ref{eqq12}) and  (\ref{eqq111a}) give 
		$$|T'_1\cap T'_2|> N/2+|A_{123}|-N/2>|A_{123}|,$$
		which clearly contradicts the fact that, due to Claim~\ref{cla8}, 
		we have $T'_1\cap T'_2\subseteq A_{123}$. This completes the proof of this case.
		
	\medskip

{\sl Case 2.} At least one of the sets $A_{12}$, $A_{13}$, and $A_{23}$ is empty.

\smallskip

	The proof of this case follows closely the idea of the previous one, yet we have much less control on the size of non-empty parts  of the
partition of $V$; in particular, it can happen that the size of $V_3$ is smaller than $N/2$. 
In order to make the argument easier to follow, we  split it into a number of short claims.
Let us start  with the following observation.  

\begin{claim}
	If $|V_3|<N/2$, then $A_{13}=A_{23}=\emptyset$.  
\end{claim}

\begin{proof}
	Let us suppose that, say, $A_{23}\neq \emptyset$ and thus by Claim~\ref{cl:zero}(vi),  $A_{12}=\emptyset$. Then, by Claim~\ref{cl:zero}(v), $|A_1|\le b'<N/8$. But then 
	$$|A_2|=|V|-|V_3|-|A_1|>3N/8,$$ 
	and 
	$$|A_{123}|\ge |V_1|-|A_1|> 3N/8,$$
	so $|V_2|=|A_2|+|A_{123}|>3N/4$ contradicting our assumptions on the size of the largest monochromatic component in~$G$. 
\end{proof}

Thus, throughout this case, we shall always assume that $A_{13}=A_{23}=\emptyset$.  If $|V_3|<N/2$ this choice is forced by the above claim, if 
$|V_i|\ge N/2$, for $i=1,2,3$, from Claim~\ref{cl:zero}(vi) at least two of the sets $A_{12}$, $A_{13}$, $A_{23}$ are empty, so  without loss of generality
we can assume that these are $A_{12}$ and $A_{23}$. 
Moreover, to be consistent with the notation, 
if $|V_3|< N/2$ we put by definition $T_3=\emptyset$ (we shall not look for the matching in such a small $F_3$  anyhow).

We start with the bounds for the size of the sets $A$'s. Note that Lemma~\ref{l:match2} implies that  
\begin{equation}\label{eqII:1}
|A_{123}|+|A_{12}|= |V_1\cap V_2|> N/4.
\end{equation}

If $A_{12}\neq \emptyset$, then, by Claim~\ref{cl:zero}(v),
\begin{equation}\label{eq:3333}
	|A_3|,|A_{12}|<N/8.
\end{equation}

Finally, the size of $A_i$'s are estimated by the following result.

\begin{claim}\label{cl:31}
	$|A_1|,|A_2|, |A_3|\le N/4$.
\end{claim}
\begin{proof}
	Suppose that the assertion does not hold and, say, $|A_1|>N/4$. Note that 
	$$|A_2|=N-|V_1|-|A_3|\ge N/8$$ 
	and so, by Lemma~\ref{l:match1}(iii), all vertices of the sets $A_1$ and $A_2$ are contained in one component $F'_3=(V'_3,E'_3)$ of the third colour. 
Let $W=V\setminus (V'_3\cup A_3)$. Note that $A_{123}\subseteq W\subseteq A_{123}\cup A_{12}$ and all edges joining $W$ and $A_1$ are of the first colour. 
We argue that $|W|\ge N/4$. Indeed, if  $A_{12}=\emptyset$, then, from \eqref{eqII:1}, we get $|W|= |A_{123}|>N/4$. 
Thus, let us assume that  $A_{12}\neq\emptyset$ and $|A_3|<N/8$. We consider two cases of the size of $V'_3$. If  $|V'_3|\le 5N/8$, then 
$$|W|\ge |V|-|V'_3|-|A_3|\ge N/4\,.$$
On the other hand, if  $|V'_3|\ge 5N/8$, then, by our choice of $F_1=(V_1,E_1)$ and $F_2=(V_2,E_2)$, we must have 
$$|V_1|,|V_2|\ge |V'_3|\ge 5N/8,$$ so, by Lemma~\ref{l:match2}, 
$$|W|\ge |A_{123}|\ge |V_1\cap V_2|-|A_{12}|\ge 3N/8-N/8=N/4\,.$$
Thus, from Lemma~\ref{l:match1}(iii) it follows that $F_1$ contains a matching saturating at least $N/2$ vertices, contradicting our assumption. 
\end{proof}

Using above result we can slightly improve \eqref{eqII:1}.

\begin{claim}\label{cl:31a}
	$|A_{123}|>N/4.$
\end{claim}

\begin{proof}
	For $A_{12}=\emptyset$ the assertion follows from \eqref{eqII:1}. If $A_{12}\neq \emptyset$, then, by \eqref{eq:3333} and Claim~\ref{cl:31},
	 we get
$$|A_{123}|=N-|A_1|-|A_2|-|A_3|-|A_{12}|>N/4\,.\qed $$
\renewcommand{\qed}{} 
\end{proof}

Let us also note the following consequence of Claim~\ref{cl:31}.

\begin{claim}\label{cl:a12}
	If $A_{12}\neq \emptyset$, then $|V_1|, |V_2|> 5N/8$, and  
	$|T_1|,|T_2|> N/8$.
\end{claim}
\begin{proof}
	For $i=1,2$, from Claims~\ref{cl:zero}(iii) and~\ref{cl:31}, we get
	\begin{equation*}
	|V_i|=N-|A_{3-i}|-|A_3|>5N/8
	\end{equation*}
	and so the lower bounds  for $T_i$ follows from Lemma~\ref{tutte}. 
\end{proof}

Before we state our next observation, let us recall that if $|V_3|<N/2$ we set $S_3=T_3=\emptyset$, $U_3=V_3$.
Moreover, similarly as in the previous case, for $i=1,2,3$, we put $S'_i=S_i\cap A_{123}$,  $S''_i=S_i\cap A_{i}$,
$U'_i=U_i\cap A_{123}$, and  $U''_i=U_i\cap A_{i}$.		 

\begin{claim}\label{cl:33}
	$T_i\cap A_i=\emptyset$ for $i=1,2,3$.

In particular,  $T_i\cap A_{123}\neq\emptyset$ for $i=1,2,3$, unless 
$i=3$ and $|V_3|<N/2$. 	
\end{claim}

\begin{proof} 
	Let $v\in T_i\cap A_i$, for some $i=1,2$. 
	Then, $|S'_i|\ge |A_{123}|-b'>N/8$. Note also that, by Lemma~\ref{tutte}(iii), 
	$|S_i|<N/4\le |A_{123}|$, and so there exists a vertex $w\in (U_i\cup T_i)\cap A_{123}$ and, consequently,  $|T_i\cap A_i|<N/8$.   
	
	If $T_i\cap A_{123}\neq \emptyset $, then 
	$|S''_i|\ge |A_i|-b'$, and so 
	$$|S_i|\ge |V_i|-N/4-|A_{12}|>N/4,$$
	which contradicts Lemma~\ref{tutte}(iii). Hence, $T_i\cap A_{123}=\emptyset $. 
	
	Thus, using Claim~\ref{cl:zero}(iii), we arrive at
	\begin{equation*}\label{eq:II3}
	\begin{aligned}
2|S_i|+|U_i|&\ge 2(|S'_i|+|S''_i|)+|U'_i|+|U''_i|\\
&\ge |S'_i|+|V_i|-|A_{12}|-|T_i\cap A_i|\\
&= |S'_i|+ N-|A_{3-i}|-|A_3|-|A_{12}|-|T_i\cap A_i|\\
&\ge 3N/4-|A_{3}|-|A_{12}|.
\end{aligned}
	\end{equation*}
Note that for $A_{12}\neq\emptyset$ from \eqref{eq:3333} we get $|A_3|+|A_{12}|\le  N/4$, while for 
$A_{12}=\emptyset$,	Claim~\ref{cl:31} gives $|A_3|+|A_{12}|=|A_3|\le  N/4$ as well. Consequently, 
$$2|S_i|+|U_i|>N/2$$
	which contradicts Lemma~\ref{tutte}, and so $A_i\cap T_i=\emptyset$ for $i=1,2$. 
	
If $|V_3|<N/2$, then $T_3=\emptyset$ and there is nothing to prove. 
	If $|V_3|\ge N/2$ and $w\in T_3\cap A_3$, then 
	$$|S_3|\ge |A_{123}|-b'=|V_3|-|A_3|-b'>N/4,$$
	contradicting  Lemma~\ref{tutte}(iii). Thus, $T_3\cap A_{3}=\emptyset $ as well. 
	
The second part of the assertion is an immediate consequence of Claim~\ref{cl:a12} and \eqref{eq:3333}.
\end{proof}

\begin{claim}\label{cl:35} 
	$T_1\cap T_2\cap A_{12}= \emptyset$.  
\end{claim}

\begin{proof}
	Suppose that  $v\in T_1\cap T_2\cap A_{12}$. Since there are no edges between $v$ and  $(A_{123}\cup A_3)\setminus (S_1\cup S_2)$, we have 
	$$|S'_1|+|S'_2|\ge |A_{123}|+|A_3|-b'.$$ 
Furthermore, from Claim~\ref{cl:a12}, for $i=1,2$, we have $|S''_i|\ge |A_i|-b'$, so
	$$|S_1|+|S_2|> |A_{123}|+|A_1|+|A_2|+|A_3|-3b'> 5N/8-|A_{12}|>N/2,$$ 
	which contradicts Lemma~\ref{tutte}(iii).
\end{proof}

\begin{claim}\label{cl:35a} 
$|U''_i|< N/8$ for $i=1,2,3$, unless $i=3$, $A_{12}=\emptyset$, and $|V_3|<N/2$.
\end{claim}

\begin{proof}
Let us first assume that $A_{12}\neq\emptyset$. Then $U''_3\subseteq A_3$ and so $|U''_3|<N/8$ by \eqref{eq:3333}. Moreover, for $i=1,2$,  
 Claims~\ref{cl:zero}(iii) and~\ref{cl:31} together with \eqref{eq:3333},  imply that
$|V_i|\ge 5N/8$. Consequently, by Lemma~\ref{tutte}, $|T_i|> N/8$ and so, by \eqref{eq:3333},  $T_i \cap A_{123}\neq \emptyset$ which results in $|U''_i|<N/8$.

Now let $A_{12}=\emptyset$. Then, by Claim~\ref{cl:33}, $T_i\subseteq A_{123}$ and the assertion follows unless $T_i=\emptyset$ which can happen only when  
$i=3$ and $|V_3|<N/2$. 
\end{proof}

Let  us assume that $|T_1|\ge |T_2|$. We shall argue that $|T_1\cap A_{123}| \ge N/8$ and so, in fact,  $U''_1=\emptyset$.
Indeed,  (\ref{eq:tt2}) implies that  for $i=1,2$, we have
\begin{equation}\label{eq:tt2}
\begin{aligned}
|T_i|&> |A_{123}\cup A_i|+|A_{12}|+|S_i|-N/2\\
&=|A_{123}|+2|A_i|+|A_{12}|-|U''_i|+|S'_{i}|-N/2\;.
\end{aligned}
\end{equation}
Hence, 
\begin{align*}
|T_1|+|T_2|&> 2|A_{123}|+2|A_1|+2|A_2|+2|A_{12}|-|U''_1|-|U''_2|-N\\
&\ge N-2|A_3|-|U''_1|-|U''_2|.
\end{align*}
Thus, if  $A_{12}= \emptyset$, then, from Claims~\ref{cl:31} and~\ref{cl:35a}, we have $|T_1|+|T_2|> N/4$ and 
$$|T_1\cap A_{123}|=|T_1|>N/8\,.$$ 
On the other hand, for  $A_{12}\neq \emptyset$  we get $|A_3|<N/8$ and $|T_1|+|T_2|> N/2$, which, in turn,
imply that $|T_1|\ge N/4$ and 
$$|T_1\cap A_{123}|=|T_1|-|A_{12}|>N/8\,.$$  
Consequently, $|T_1\cap A_{123}| > N/8$ and $U''_1=\emptyset$.

Using the above fact we argue that, under our assumptions,  the ``exceptional case'' mentioned in Claim~\ref{cl:35a} never occurs.

\begin{claim}\label{cl:35aa} 
	If $A_{12}=\emptyset$, then $|V_3|> N/2$.
\end{claim}

\begin{proof}
	Let $A_{12}=\emptyset$. Then, by Claim~\ref{cl:33}, $T_1\subseteq A_{123}$. Moreover, $U''_1=\emptyset$,  and 
so  Lemma~\ref{tutte} gives
	\begin{align*}
	|A_{123}|&\ge |T_1|> |V_1|+|A_1|-N/2=N/2-|A_2|-|A_3|+ |A_1|\\
	&=N/2+|A_1|-|A_2|-(|V_3|-|A_{123}|)\\
	&\ge N/2-|V_3|+ |A_{123}|,
	\end{align*}
Hence, $|V_3|> N/2$. 	
\end{proof}

Now, for $i=1,2,3$,  let $T'_i=T_i\setminus (S'_{1}\cup S'_{2}\cup S'_{3})$.
Note that 
\begin{equation}\label{eq:tt3}
|T_3|\ge  |A_{123}\cup A_3|+|S_3|-N/2=|A_{123}|+2|A_3|-|U''_3|+|S'_{3}|-N/2\;,
\end{equation} 
so from  (\ref{eq:tt2}), (\ref{eq:tt3}), and Claim~\ref{cl:35a} we get
\begin{equation}\label{eq:t3}
	\begin{aligned}
		|T'_1|+|T'_2|&+|T'_3|-|A_{123}|\\
		&\ge  |T_1|+|T_2|+|T_3|-|A_{123}|-|S'_{1}|-|S'_{2}|-|S'_{3}|\\
		&> 2(|A_{123}|+|A_1|+|A_2|+|A_3|+|A_{12}|)\\
		&\quad\quad\quad -|U''_2|-|U''_3|-3N/2\\
		&\ge N/2 -|U''_2|-|U''_3| > N/4.
	\end{aligned}
\end{equation}

\begin{claim}\label{cl:36} 
$|T'_1|,|T'_2|> N/8$ and 	$(T'_1\cup T'_2)\cap T'_3 = \emptyset$.  
\end{claim}

\begin{proof}
By Claim~\ref{cl:33}, we have $T_1,T_2, T_3\subseteq A_{12}\cup A_{123}$. Hence,  for $i=1,2$, we have 
$$|T'_i|+|T'_3|\le 	|T_i|+|T_3|\le |A_{123}|+|A_{12}|< |A_{123}|+N/8\,,$$
and so  \eqref{eq:t3} implies that $|T'_{3-i}|>N/8$. 	

Now	suppose that  $v\in T'_1\cap T'_3$. Note that there are no edges between $v$ and $T'_2$, and 
	$|T'_2|-1>N/8-1>b'$ which contradicts the assumption that $G$ is $b'$-dense. 
	In a similar way one can show that $T'_2\cap T'_3=\emptyset$. 
\end{proof}	

From the above fact we get 
$T'_1\cup T'_2\subseteq (A_{123}\setminus T'_3)\cup A_{12}$
so, using   (\ref{eq:t3}), we infer that
\begin{align*}\label{eq:t4a}
|T'_1\cap T'_2|&=|T'_1|+|T'_2|-|T'_1\cup T'_2|\\
&\ge |T'_1|+|T'_2|-|A_{123}\setminus T'_3|-|A_{12}|\ge N/8\,.\nonumber
\end{align*} 
Thus, by Claim~\ref{cl:35}, 
$$|T'_2\cap A_{123}|\ge |T'_1\cap T'_2\cap A_{123}|= |T'_1\cap T'_2| >N/8\,,$$ 
and so 
$U''_2=\emptyset$. 

But this means that  \eqref{eq:t3} can be improved to 
\begin{equation*}\label{eq:t33}
	|T'_1|+|T'_2|+|T'_3|-|A_{123}| \ge N/2-|U''_3|>3N/8.
\end{equation*}
Since $T'_1,T'_2\subseteq (A_{123}\setminus T'_3)\cup A_{12}$, from the above inequality and Claim~\ref{cl:36} we get 	
\begin{equation}\label{eq:t4}
	\begin{aligned}
		|T'_1\cap T'_2|&=|T'_1|+|T'_2|-|T'_1\cup T'_2|\\
		&\ge |T'_1|+|T'_2|-|A_{123}|-|A_{12}|+|T'_3|\\
		&> N/2-|A_{12}|-|U''_3|\ge N/4.
	\end{aligned} 
\end{equation}
and so, by Claim~\ref{cl:35}, 
\begin{equation}\label{eq:t41}
	|T'_1\cap T'_2\cap A_{123}|\ge N/4.
\end{equation} 

Note that if $T'_3\neq\emptyset$, then $v\in T'_3$ is not adjacent to any vertex 
from $T'_1\cap T'_2\cap A_{123}$ which contradicts the fact that $G$ is $b'$-dense. 
Thus, we may assume that  $T'_3=\emptyset$.

\begin{claim}\label{cl:37}
	$(T'_1\cup T'_2)\cap A_{12}=\emptyset$, so
	$T'_1\cup T'_2\subseteq A_{123}\subseteq V_3$.
\end{claim} 

\begin{proof}
	Let 
	$v\in  (T'_1\cup T'_2)\cap A_{12}$. From (\ref{eq:t41}) at least    
	$ N/4$ vertices are not adjacent to $v$ which contradicts the fact that $G$ is $b'$-dense.
\end{proof}

Note that by Claims~\ref{cl:33} and~\ref{cl:35aa} we have $T_3\cap A_{123}\neq \emptyset$ and, consequently,  $|U''_3|\le \min\{b', |A_3|\}$. 
Thus,  using Claim~\ref{cl:37}, from \eqref{eq:t4} we get 
$$|T'_1\cap T'_2|> N/2-\min\{b',|A_3|\}.$$
Therefore,  the set 
$W'=T'_1\cap T'_2\subseteq V_3$ has more than $N/2-|A_3|\ge 3N/8$ vertices, and
all edges contained in $W'$, as well as  
 all edges joining $W'$ to $A_3$, are of the third colour. 
Thus, 
we can use Lemma~\ref{l:match1}(ii) to match  $\min\{|A_3|,N/8\}$ vertices of $A_3$ to vertices of $W'$ and apply Dirac's Theorem to saturate
yet unsaturated vertices of $W'$ by a matching disjoint from the previous one.
\end{proof}

\begin{proof}[Proof of Theorem~\ref{thm:main}]Theorem~\ref{thm:main} is an immediate consequence of 
	Lemmata~\ref{l:giant} and~\ref{l:small}.
\end{proof}

\section{Example}\label{sec:example}

Finally, we give an example of a graph $\hG=(\hV, \hE)$ on $8\ell$ vertices with $\delta(\hG)=7\ell-2$
for which the
property $\hG\to (C_{4\ell},C_{4\ell},C_{4\ell})$ does not hold. 

Let $\hV=\{a,b\}\cup A_1 \cup A_2 \cup A_3 \cup A_4 $, where 
$|A_1|=|A_4|=2\ell-2$ and $|A_2|=|A_3|=2\ell+1$. 
First we  put in $\hG$ all edges inside the set $A_1\cup A_3$, inside the set $A_2\cup A_4$, 
and the edge $\{a,b\}$, and colour them red. Clearly, in this way we create no red components of size $4\ell$ and thus no red cycles of length $4\ell$.

Next we add to $\hG$ all edges joining $a$ to $A_1\cup A_2\cup A_3\cup A_4$, all edges between 
sets $A_1$ and $A_2$, and all edges between $A_3$ and $A_4$. All these edges we colour blue. Note that $a$ is a cut vertex of the blue graph, so each blue cycle is contained either in $\{a\}\cup A_1\cup A_2$, or in $\{a\}\cup A_3\cup A_4$. But each of these two graphs is isomorphic to $K_{1,2\ell-2,2\ell+1}$, and so 
has $4\ell$ vertices and independence number $2\ell+1$ which, clearly, implies that it contains no  $C_{4\ell}$.

To define the green colouring we need to partition each set $A_i$ into two sets $A_{i1}\cup A_{i2}$, $1\le i\le 4$,  with almost equal sizes, i. e., 
$|A_{11}|=|A_{12}|=|A_{41}|=|A_{42}|= \ell-1$, $|A_{21}|=|A_{31}|= \ell$, and
$|A_{22}|=|A_{32}|= \ell+1$. Now we add to $\hG$ all edges joining $b$ to  
$A_1\cup A_2\cup A_3\cup A_4$ and all edges between four pairs of sets:  $(A_{11},A_{42})$,  $(A_{12},A_{41})$, $(A_{21},A_{31})$, and $(A_{22},A_{32})$. We colour all these edges by green.
Then, after removing the vertex $b$, the green graph decomposes into four components each of them 
of at most $2\ell+2$ vertices. Thus, there are no green cycles of length $4\ell$.

Finally, observe that the only pairs which are not edges of $\hG$ are those between 
sets  $(A_{11},A_{41})$,  $(A_{12},A_{42})$, $(A_{21},A_{32})$, and $(A_{22},A_{31})$.
Thus, $\delta(\hG)=7\ell-2$.	

\usetikzlibrary{decorations.pathmorphing} 	

\begin{figure}
	\begin{tikzpicture}[ decoration={bent,aspect=.3}] 
	\draw (4,-6) circle (0.04cm) ;
	\put (110,-160) {\large $a$}
	\draw (17,-6) circle (0.04cm);
	\put (463,-160) {\large $b$}
	\draw [draw=red, very thick, loosely dashed]  (4,-6) -- (17,-6);
	\node[circle,draw] (A) at (9,-3) {$A_{11}$}; 
	\node[circle,draw] (B) at (7,-4) {$A_{12}$};
	\node[circle,draw] (C) at (14,-3) {$A_{21}$};
	\node[circle,draw] (E) at (9,-8) {$A_{31}$};
	\node[circle,draw] (D) at (12,-4) {$A_{22}$};
	\node[circle,draw] (F) at (7,-9) {$A_{32}$};
	\node[circle,draw] (G) at (14,-8) {$A_{41}$};
	\node[circle,draw] (H) at (12,-9) {$A_{42}$};
	\draw [draw=blue, very thick, dotted]  (4,-6) -- (C);
	\draw [draw=blue, very thick, dotted]  (4,-6) -- (A);
	\draw [draw=green, very thick]  (17,-6) -- (A);
	\draw [draw=blue,  very thick, dotted]  (4,-6) -- (B);
	\draw [draw=blue,  very thick, dotted]  (4,-6) -- (G);
	\draw [draw=blue,  very thick, dotted]  (4,-6) -- (H);
	\draw [draw=blue,  very thick, dotted]  (4,-6) -- (D);
	\draw [draw=blue,  very thick, dotted]  (4,-6) -- (E);
	\draw [draw=blue,  very thick, dotted]  (4,-6) -- (F);
	\draw[draw=green, very thick] (17,-6) -- (C); 
	\draw[draw=green, very thick] (17,-6) -- (D); 
	\draw[draw=green, very thick] (17,-6) -- (E); 
	\draw[draw=green, very thick] (17,-6) -- (F); 
	\draw [draw=green, very thick]  (17,-6) -- (A);
	\draw [draw=green, very thick]  (17,-6) -- (B);
	\draw [draw=green,very  thick]  (17,-6) -- (G);
	\draw [draw=green, very thick]  (17,-6) -- (H);
	\draw[draw=red, loosely dashed, very thick] (E) -- (F); 
	\draw[draw=green,very thick] (A) -- (H);  
	\draw[draw=green,very thick] (B) -- (G); 
	\draw[draw=green,very thick] (D) -- (E); 
	\draw[draw=red,loosely dashed, very thick] (A) -- (B); 
	\draw[draw=blue,very thick, dotted] (A) -- (C); 
	\draw[draw=blue,very thick, dotted] (A) -- (D); 
	\draw[draw=blue,very thick, dotted] (B) -- (C); 
	\draw[draw=blue,very thick, dotted] (B) -- (D); 
	\draw[draw=red,loosely dashed, very thick] (C) -- (D); 
	\draw[draw=red,loosely dashed, very thick] (C) -- (G);  
	\draw[draw=red,loosely dashed, very thick] (C) -- (H); 
	\draw[draw=red,loosely dashed, very thick] (D) -- (G); 
	\draw[draw=red,loosely dashed, very thick] (D) -- (H);     
	\draw[draw=red,loosely dashed, very thick] (E) -- (A); 
	\draw[draw=red,loosely dashed, very thick] (E) -- (B); 
	\draw[draw=red,loosely dashed, very thick] (F) -- (A); 
	\draw[draw=red,loosely dashed, very thick] (F) -- (B); 
	\draw[draw=red,loosely dashed, very thick] (G) -- (H);  
	\draw[draw=blue,very thick,dotted] (G) -- (E);  
	\draw[draw=blue,very thick, dotted] (G) -- (F);  
	\draw[draw=blue,very thick,dotted] (H) -- (E);  
	\draw[draw=blue,very thick,dotted] (H) -- (F);  
	\draw[draw=green,very thick] (C) -- (F);  
	\node[circle,draw,fill=red!30] at (9,-3) {$A_{11}$}; 
	\node[circle,draw,fill=red!30]  at (7,-4) {$A_{12}$};
	\node[circle,draw,fill=red!30]  at (14,-3) {$A_{21}$};
	\node[circle,draw,fill=red!30]  at (9,-8) {$A_{31}$};
	\node[circle,draw,fill=red!30]  at (12,-4) {$A_{22}$};
	\node[circle,draw,fill=red!30]  at (7,-9) {$A_{32}$};
	\node[circle,draw,fill=red!30]  at (14,-8) {$A_{41}$};
	\node[circle,draw,fill=red!30]  at (12,-9) {$A_{42}$};
	\end{tikzpicture}
	\caption{The colouring of the graph $\hat G$. The dotted, dashed, and solid lines stand for different colours of edges joining two sets of vertices.}
\end{figure}
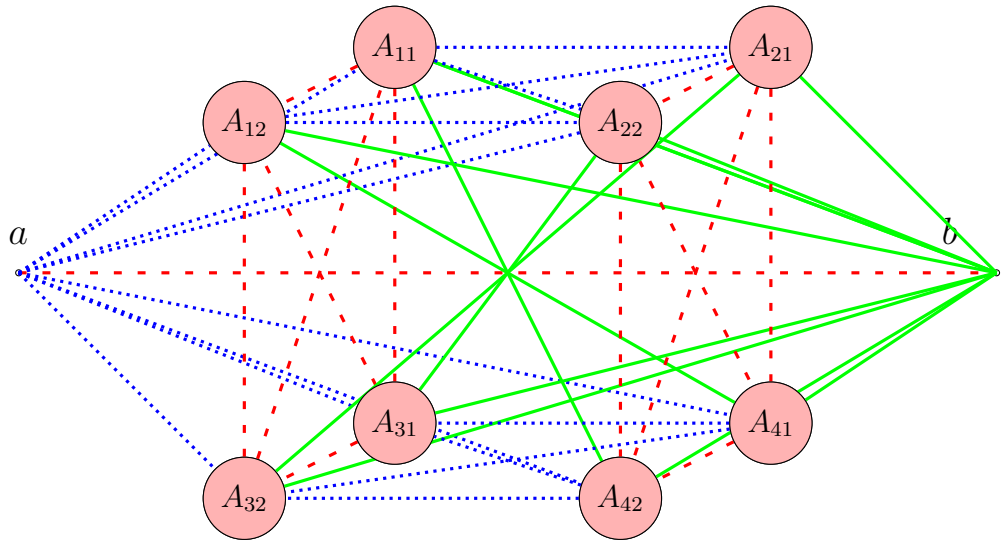

\break

\bibliographystyle{plain}

\end{document}